\newcommand{\lang}{\left\langle}
\newcommand{\rang}{\right\rangle}
\newcommand{\zz}{{\mathfrak{z}}}
\newcommand{\com}{{\mathbb C}}
\newcommand{\bC}{\mathsf{C}}
\newcommand{\CR}{\mathsf{CR}}
\newcommand{\cV}{\mathcal{V}}
\newcommand{\cF}{\mathcal{F}}
\newcommand{\bE}{\mathsf{E}}
\newcommand{\bd}{{\boldsymbol{\delta}}}
\newcommand{\fI}{\textfrak{I}}
\newcommand{\MM}{\mathsf{M}}
\newcommand{\C}{\mathbb{C}}
\newcommand{\Q}{\mathbb{Q}}
\newcommand{\Z}{\mathbb{Z}}
\newcommand{\cO}{\mathcal{O}}
\newcommand{\bO}{\mathsf{O}}
\newcommand{\Pp}{{\mathbf{P}^1}}
\newcommand{\cI}{\mathcal{I}}
\newcommand{\rarr}{\rightarrow}
\newcommand{\ch}{{\text{ch}}}
\newcommand{\ft}{\mathfrak{t}}
\newcommand{\bR}{\mathsf{R}}
\newcommand{\bA}{\mathcal{A}}
\newcommand{\bZ}{\mathsf{Z}}
\newcommand{\oM}{\overline{M}}
\newcommand{\eqq}{\stackrel{\sim}{=}}
\DeclareMathOperator{\Hilb}{Hilb}
\DeclareMathOperator{\Chow}{Chow}
\DeclareMathOperator{\Aut}{Aut}
\DeclareMathOperator{\Lie}{Lie}
\DeclareMathOperator{\Ext}{Ext}
\DeclareMathOperator{\vir}{vir}
\newtheorem{Theorem}{Theorem}
\newtheorem{Lemma}{Lemma}
\newtheorem{Corollary}{Corollary}
\newtheorem{Proposition}[Lemma]{Proposition}
\newtheorem{Conjecture}{Conjecture}
\begin{document}
\title{Gromov-Witten/Donaldson-Thomas correspondence for toric
3-folds}
\author{D.~Maulik, A.~Oblomkov, A.~Okounkov, and R.~Pandharipande}
\date{September 2008}
\maketitle

\begin{abstract}
We prove the equivariant Gromov-Witten theory of a
nonsingular toric 3-fold $X$ with primary insertions 
is equivalent to the equivariant Donaldson-Thomas theory 
of $X$. As a corollary, the topological vertex calculations 
by Agangic, Klemm, Mari\~no, and Vafa of
the Gromov-Witten theory of local Calabi-Yau toric $3$-folds
are proven to be correct in the full 3-leg setting.
\end{abstract}

\setcounter{tocdepth}{2}
\tableofcontents

\section{The GW/DT correspondence}
\label{pppr}
\subsection{Gromov-Witten theory}
\label{gwt}
Let $X$ be a nonsingular projective 3-fold. Let
 $\oM'_{g,r}(X,\beta)$ denote the moduli space of
$r$-pointed stable maps
$$
f: C \to X
$$
from possibly disconnected
genus $g$ curves to $X$ 
representing the
class $$\beta\in H_2(X,\Z)$$
and {\em not} collapsing any connected components.

Denote the evaluation map corresponding to the $i^{th}$
marked point by
$$
\text{ev}_i: \oM'_{g,r}(X,\beta) \rarr X.
$$
Given classes  $\gamma_i\in H^*(X,\mathbb{Z})$,
the corresponding \emph{primary} Gromov-Witten invariants are
defined by
\begin{equation}
\lang \gamma_1,\dots,\gamma_r \rang'_{g,\beta} =
\int_{\left[\oM_{g,r}'(X,\beta)\right]^{\vir}}
\prod_{i=1}^r \text{ev}_i^*(\gamma_{i}) \,,
\label{prgw}
\end{equation}
the virtual counts of genus $g$ degree
 $\beta$  curves
in $X$ meeting cycles Poincar\'e dual to $\gamma_i$.
The integration in \eqref{prgw} is against the \emph{virtual
fundamental class} of dimension
$$
\dim \left[\overline{M}'_{g,r}(X,\beta)\right]^{\vir}
= \int_\beta c_1(T_X) + r \,.
$$
We will often call the primary fields $\gamma_i$ {\em insertions}.
Foundational aspects of the theory
are treated, for example, in \cite{Beh, BehFan, LiTian}.

We assemble the primary invariants into the 
generating function
\begin{equation}
\label{abc}
\bZ'_{GW}\left(\left. X,u\ \right|
\gamma_1,\dots,\gamma_r\right)_\beta =
\sum_{g\in\Z} \lang \gamma_1,\dots,\gamma_r \rang'_{g,\beta} \, u^{2g-2}.
\end{equation}
Since the domain components must map nontrivially, an elementary
argument shows the genus $g$ in the  sum \eqref{abc} is bounded from below.
Following the terminology of \cite{mnop1},
we view \eqref{abc} as a  \emph{reduced} partition function.

For brevity, 
we will often omit the arguments of $\bZ'_{GW}$ which 
are clear from context. The
genus subscript, however,
 will follow a different convention.
When  the genus is not present, a summation over
all genera is understood
$$
\lang \gamma_1,\dots,\gamma_r \rang'_{\beta} =
\bZ'_{GW}\left(\left. X,u\ \right|
\gamma_1,\dots,\gamma_r\right)_\beta \,.
$$

\subsection{{Donaldson-Thomas theory}}
\label{uch}
Donaldson-Thomas theory is defined via integration over
the moduli space of ideal sheaves \cite{DT,T} of $X$.
An {\em ideal sheaf} is a torsion-free sheaf of rank 1 with
trivial determinant. For an  ideal sheaf ${\mathcal I}$, the
canonical map
$$0 \rarr {\mathcal I} \rarr {\mathcal I} ^{\vee\vee}$$
is an injection.
As ${\mathcal I}^{\vee\vee}$ is reflexive of rank 1 with trivial
determinant,
$${\mathcal I}^{\vee\vee}\eqq \cO_X,$$
see \cite{OSS}.
Each ideal sheaf $\cI$
determines a subscheme $Y\subset X$,
$$
0 \rarr {\mathcal I} \rarr \cO_X \rarr \cO_{Y} \rarr 0.
$$
By the triviality of the determinant,
the components of the subscheme $Y$
have dimension at most $1$.
The 1-dimensional components of $Y$
(weighted by their intrinsic multiplicities) determine
an element,
$$[Y] \in H_{2}(X,\Z).$$
Let $I_n(X,\beta)$ denote the moduli space of ideal sheaves
${\mathcal I}$ satisfying
$$\chi(\cO_{Y}) = n \ \ \text{and} \ \ 
[Y] = \beta\in H_2(X,\Z).$$
Here, $\chi$ denotes the holomorphic Euler characteristic.

The Donaldson-Thomas invariant is defined
via integration against virtual class
$[I_n(X,\beta)]^{\vir}$ of dimension
$$\text{dim} [I_n(X,\beta)]^{\vir} = \int_\beta c_1(T_X).$$
Foundational aspects of the theory
are treated in \cite{mnop1,T}.

The moduli space $I_n(X,\beta)$ is canonically isomorphic to the
Hilbert scheme \cite{mnop1}. As the Hilbert scheme is a fine moduli
space, we have the \emph{universal ideal sheaf}
\begin{equation}
\fI \rarr I_n(X,\beta) \times X\,.
\label{uIs}
\end{equation}
Let $\pi_1$ and $\pi_2$ denote the projections to the respective
factors of $I_n(X,\beta) \times X$.
Since $\fI$ is $\pi_1$-flat and $X$ is nonsingular,
a finite resolution
of $\fI$ by locally free sheaves on $I_n(X,\beta) \times X$
exists. Hence, the Chern classes of $\fI$ are well-defined.
The second Chern class $c_2(\fI)$ may be interpreted as
the class of the universal subscheme.

For each class $\gamma\in H^*(X, \mathbb{Z})$,
let $c_{2}(\gamma)$ denote the 
operator{\footnote{Since $c_1(\fI)$
vanishes, $c_2(\fI)=-\text{ch}_2(\fI)$. Thus, the
definition of primary field insertions here agrees with the
definition of \cite{mnop2}.}}
on the homology
$H_*(I_n(X,\beta), \mathbb{Z})$
defined by
\begin{equation}
c_{2}(\gamma)\big(\xi\big)=
 \pi_{1*}\big(   c_{2}(\fI)
\cdot \pi_2^*(\gamma) \cap      \pi_1^*(\xi)\big)\,.
\label{c2oper}
\end{equation}
Since $\pi_1$ is flat, the homological pull-back $\pi_1^*$ is well-defined
\cite{xxx}.

The primary Donaldson-Thomas
invariants are defined by
\begin{equation}
\lang \gamma_1,\dots,\gamma_r \rang_{n,\beta} =
\int_{[{I}_n(X,\beta)]^{\vir}}
\prod_{i=1}^r c_{2}(\gamma_{i})\,,
\label{prdt}
\end{equation}
where the latter integral is the push-forward to a point of
the class
$$
c_{2}(\gamma_{1}) \ \circ\ \cdots\
\circ\  c_{2}(\gamma_{r})\left(
\left[I_n(X,\beta)\right]^{\vir}\right).
$$
A similar slant product construction can be found in the Donaldson
theory of 4-manifolds.

As in Gromov-Witten theory, we assemble primary invariants into a
generating function,
\begin{equation}
\label{abb}
\bZ_{DT}\left(\left. X,q\ \right|
\gamma_1,\dots,\gamma_r\right)_\beta =
\sum_{n\in\Z} \lang \gamma_1,\dots,\gamma_r \rang_{n,\beta} \, q^n.
\end{equation}
An elementary
argument shows $n$ in \eqref{abb} is bounded from below.

The reduced  partition function
is obtained by formally removing the
degree 0 contributions,
$$
\bZ'_{DT}\left(\left. X,q\ \right|
\gamma_1,\dots,\gamma_r\right)_\beta =
\frac{\bZ_{DT}\left(\left. X,q\ \right|
\gamma_1,\dots,\gamma_r\right)_\beta}
{\bZ_{DT}(X,q)_0}\,.
$$
The evaluation of  the
 degree 0 partition function was conjectured in  \cite{mnop1,mnop2}
and proven in
 \cite{LP,li},
$$
\bZ_{DT}(X,q)_0= M(-q)^{\int_X c_3(T_X \otimes K_X)}\,,
$$
where
\begin{equation}\label{macf}
M(q) = \prod_{n\geq 1} \frac{1}{(1-q^n)^n}
\end{equation}
is the McMahon function.

\subsection{Primary GW/DT correspondence}
\label{pgwdt}
The primary correspondence consists of two claims \cite{mnop1,mnop2}:
\begin{enumerate}
\item[(i)]
The series
$\bZ'_{DT}$
is a rational function of $q$.
\item[(ii)] After the change of variables $e^{iu}=-q$,
$$
(-iu)^\bd\,  \bZ'_{GW}
\left(\left. X,u\ \right|
\gamma_1,\dots,\gamma_r\right)_\beta
=
(-q)^{-\bd/2}\,
\bZ'_{DT}
\left(\left. X,q\ \right|
\gamma_1,\dots,\gamma_r\right)_\beta
\,,
$$
where $\bd= \int_\beta c_1(T_X)$ is the virtual dimension.
\end{enumerate}

The main result of the paper is the proof of
the primary correspondence for toric $X$. Let $T$
be the 3-dimensional torus acting on $X$.

\begin{Theorem} \label{aaa}
The primary GW/DT correspondence holds for all nonsingular
toric 3-folds $X$ in $T$-equivariant cohomology.
\end{Theorem}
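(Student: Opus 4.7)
The plan is to exploit $T$-equivariant localization on both sides of the correspondence. For toric $X$, the $T$-fixed loci of $\oM'_{g,r}(X,\beta)$ and of $I_n(X,\beta)$ admit explicit combinatorial descriptions indexed by the polytope of $X$: on the DT side by a 3D partition at each $T$-fixed point together with 2D partitions along each $T$-invariant $\Pp$, and on the GW side by covers of the $T$-invariant curves with contracted higher-genus components sitting over the $T$-fixed points. The virtual localization formula writes each partition function as a sum of vertex and edge contributions assembled along the polytope, and the first task is to show these two expansions match term-by-term after the change of variables $e^{iu}=-q$.

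The second step is to absorb the primary insertions into this local framework. Since the $T$-fixed locus of $X$ is isolated, any equivariant class $\gamma \in H^*_T(X)$ is determined by its restrictions to the $T$-fixed points. The GW insertion $\text{ev}_i^*\gamma$ and the DT operator $c_2(\gamma)$ then distribute over the combinatorial sum according to which $T$-fixed vertex each insertion is attached to. This reduces the primary GW/DT correspondence for $X$ to a \emph{capped vertex correspondence}: a local matching, at each $T$-fixed point, of GW and DT generating functions with an arbitrary number of insertions supported on that vertex, together with a correspondence for the capped edges.

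The third step is a reduction via degeneration. Degeneration formulas are known to hold on both sides and to be compatible under $e^{iu}=-q$; degenerating $X$ along $T$-invariant divisors expresses its invariants in terms of relative invariants of a short list of simple local models (neighborhoods of $T$-fixed points, neighborhoods of $T$-invariant curves, and the resulting rubber targets). Combined with the absolute GW/DT correspondence proved in the earlier MNOP papers, the local curve correspondence yields the capped edge and the 1- and 2-leg capped vertex correspondences almost directly, via rubber calculus over the $\Pp$ factor.

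The main obstacle is the equivariant correspondence for the full 3-leg topological vertex at $\C^3$, where all three asymptotic partitions along the coordinate axes are nontrivial. Here the strategy I would pursue is a rigidity/generation argument: show that, on each side, the capped 3-leg vertex is uniquely determined by its 2-leg specializations together with a sufficient supply of descendent or relative constraints obtained by degenerating and cutting carefully chosen toric test geometries whose correspondence is already known. The correspondence on the 2-leg vertices and on the test geometries then forces the correspondence for the 3-leg vertex. Combining this with the earlier reductions yields Theorem \ref{aaa}; the bulk of the work, and the technical heart of the proof, lies in setting up enough linear conditions on the two sides to execute this rigidity step cleanly.
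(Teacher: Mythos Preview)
Your outline matches the paper's strategy closely: capped localization, matching of primary insertions vertex-by-vertex, and an inductive $1$-leg $\to$ $2$-leg $\to$ $3$-leg argument via degeneration and a rigidity step. Two points are understated, and one essential input is missing.

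First, the capped edge and the $2$-leg vertex do not follow ``almost directly'' from the local curve correspondence. The local curve results of \cite{BP,GWDT} give the edge only for the fiberwise $(\C^*)^2$-torus; upgrading to the full $3$-torus $T$ is a genuine step, carried out in the paper via a differential equation in $q$ (coming from $\sigma_1(1)$ on the DT side and $\tau_1(1)$ on the GW side) whose coefficient operators are the explicit $\mathsf{M}(w_1,w_2)$ operators. The $2$-leg vertex is then extracted not from $\C^3$ directly but from the $\bA_1$-cap $\bA_1\times\Pp/F_\infty$: capped localization there produces a linear system in the unknown $2$-leg vertices, and one must prove the system is uniquely solvable. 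This is Lemma~\ref{fundlem}, the maximal-rank statement for the matrix $[\bC^*(\lambda,\mu,\emptyset)]$, proved by specializing to the Calabi--Yau torus $t_1+t_2+t_3=0$ and reducing to a skew Schur function identity. Your ``rigidity/generation argument'' is exactly this, but the concrete mechanism---$\bA_n$-caps as the test geometries and the rank lemma as the solvability criterion---is what makes it work, and the same rank lemma handles the $3$-leg step via the $\bA_2$-cap.

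Second, and more seriously, you do not name the external input that drives the whole induction. The $\bA_n$-cap and capped $\bA_n$-rubber can only be matched because the GW/DT correspondence for the \emph{divisor} operators $\bO(\Gamma_F)$ on $\bA_n\times\Pp$ is already known (Theorem~\ref{mmoo}, from \cite{mo2}). This is what makes the differential equation \eqref{ndivCR} identical on both sides and lets the reconstruction of capped rubber from its horizontal part go through. Without this input the linear systems you set up have matching unknowns but not matching coefficients, and the argument stalls. You should make this dependency explicit.
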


In the toric case, both $Z'_{GW}$ and $Z'_{DT}$ take values in
the $T$-equivariant cohomology ring of a point
$$
H^*_T(\bullet,\mathbb{Q}) = {\text{Sym}}( \ft^*)\,, \quad \ft=\Lie T \,.
$$
The total codimension of the
insertions in the $T$-equivariant theory
is allowed to exceed the virtual dimension.
Since both theories are well-defined by residues in the
noncompact case, we do not require $X$ to be projective
for Theorem \ref{aaa}. 

If $X$ is projective, the usual
Gromov-Witten and Donaldson-Thomas invariants are
obtained in the non-equivariant limit. Hence,
Theorem 1 implies the usual GW/DT correspondence in the
nonsingular projective toric case.

\subsection{Relative GW/DT correspondence}
In addition to the primary correspondence, the
correspondence of \emph{relative} theories 
will play an important role.

Let $D\subset X$ be a smooth divisor. Relative Gromov-Witten and 
Donaldson-Thomas
theories enumerate curves with specified tangency to
the divisor $D$. See \cite{mnop2} for a technical discussion
of relative theories.

In Gromov-Witten theory, relative conditions  are
represented by a partition $\mu$ of the number
$$
d=\beta\cdot[D],
$$
each part $\mu_i$ of which is marked
by a cohomology class $\gamma_i\in H^*(D,\mathbb{Z})$. 
The numbers $\mu_i$ record
the multiplicities of intersection with $D$
while the cohomology labels $\gamma_i$ record where the tangency occurs.
More precisely, we integrate
the pull-backs of $\gamma_i$
via the evaluation maps
$$
\oM_{g,r}'(X/D,\beta) \to D
$$
at the points of tangency.
By convention, an absent cohomology label stands for
$1\in H^*(D,\mathbb{Z})$.

In Donaldson-Thomas theory, the relative moduli space admits a natural
morphism to the Hilbert scheme of
$d$ points in $D$. Cohomology classes on $\Hilb(D,d)$ may thus
be pulled back to the relative moduli space. We will work in
the \emph{Nakajima basis} of $H^*(\Hilb(D,d),\mathbb{Q})$ indexed
by a partition $\mu$ of $d$
labeled by cohomology classes of $D$. For example, the
class
$$
\left.\big|\mu\rang \in H^*(\Hilb(D,d),\mathbb{Q})\,,
$$
with all cohomology labels equal to the identity,
 is $\prod \mu_i^{-1}$ times
the Poincar\'e dual of the closure of the subvariety formed by unions of
schemes of length
$$
\mu_1,\dots, \mu_{\ell(\mu)}
$$
supported at $\ell(\mu)$ distinct points of $D$.

The {\em Fock space} associated to $D$ is the sum
$$\mathcal{F} = \bigoplus_{d\geq 0} H^*(\Hilb(D,d),\mathbb{Q}).$$
The Fock space is equipped with the Nakajima basis, a
natural inner product from the Poincar\'e pairing on
$H^*(\Hilb(D,d),\mathbb{Q})$, and standard{\footnote{We will
follow the Fock space notation of \cite{GWDT}.}} operators $\alpha_{\pm r}$
for raising and lowering.

The conjectural relative GW/DT correspondence \cite{mnop2}
equates the generating functions
\begin{multline}\label{relcor}
(-iu)^{\bd+\ell(\mu)-|\mu|}\,  \bZ'_{GW}
\left(\left. X/D,u\ \right|
\gamma_1,\dots,\gamma_r \, \big| \mu  \right)_\beta
\overset{\textup{?}}=\\
(-q)^{-\bd/2}\,
\bZ'_{DT}
\left(\left. X/D,q\ \right|
\gamma_1,\dots,\gamma_r\,\big| \mu \right)_\beta
\,,
\end{multline}
after the change of variables $e^{iu}=-q$. Here,
$\bd= \int_\beta c_1(T_X)$ is the virtual dimension, and
$\mu$ is a cohomology weighted partition with
$\ell(\mu)$ parts. As before, \eqref{relcor} is
conjectured to be a rational function of $q$.

\subsection{Degeneration formulas}\label{sdeg}

Relative theories satisfy degeneration formulas. Let
$$
\textfrak{X}\to B
$$
be a nonsingular $4$-fold fibered over an irreducible and
nonsingular base curve $B$. Let $X$ be a nonsingular
fiber and
$$
X_1 \cup_{D} X_2
$$
be a reducible special fiber consisting of two nonsingular
$3$-folds intersecting transversally along a nonsingular
surface $D$.

If all insertions $\gamma_1,\dots,\gamma_r$ lie in the
image of
$$
H^*(X_1 \cup_{D} X_2,\mathbb{Z}) \to H^*(X,\mathbb{Z})\,,
$$
the degeneration formula in Gromov-Witten theory takes the form
\cite{IP, LR,L}
\begin{multline}\label{degGW}
 \bZ'_{GW}
\left(\left. X\right|
\gamma_1,\dots,\gamma_r\, \right)_\beta =\\
\sum \bZ'_{GW}
\left(\left. X_1\right|
\,\dots\, \big| \mu  \right)_{\beta_1}  \,
\zz(\mu) \, u^{2\ell(\mu)} \,
\bZ'_{GW}
\left(\left. X_2\right|
\,\dots\, \big| \mu^\vee  \right)_{\beta_2} \,,
\end{multline}
where the summation is over all curve splittings
$\beta=\beta_1+\beta_2$, all splitting of the
insertions $\gamma_i$, and all relative conditions $\mu$.

In \eqref{degGW}, the cohomological labels of
$\mu^\vee$ are Poincar\'e duals of the labels of $\mu$.
The gluing factor $\zz(\mu)$ is the order of the
centralizer of in the symmetric group $S(|\mu|)$ of
an element with cycle type $\mu$.

The degeneration formula in Donaldson-Thomas theory takes a very similar
form, 
\begin{multline*}
 \bZ'_{DT}
\left(\left. X\right|
\gamma_1,\dots,\gamma_r\, \right)_\beta =\\
\sum \bZ'_{DT}
\left(\left. X_1\right|
\,\dots\, \big| \mu  \right)_{\beta_1}  \, 
(-1)^{|\mu|-\ell(\mu)} \,
\zz(\mu) \, q^{-|\mu|} \,
\bZ'_{DT}
\left(\left. X_2\right|
\,\dots\, \big| \mu^\vee  \right)_{\beta_2} \,,
\end{multline*}
see \cite{mnop2}. The sum over the relative conditions
$\mu$ is interpreted as the coproduct of $1$,
$$
\Delta 1 = \sum_{\mu}
(-1)^{|\mu|-\ell(\mu)} \, \zz(\mu) \, 
\left.\big|\mu\rang \otimes \left.\big|\mu^\vee\rang\ ,
$$
in the
tensor square of $H^*(\Hilb(D,\beta\cdot [D]),\mathbb{Z})$.
Conjecture \eqref{relcor} is easily seen to be compatible with
degeneration.

\subsection{$\bA_n$ geometries} \label{angeo}
Let $\zeta$ be a primitive $(n+1)^{th}$ root
of unity, for $ n \geq 0$.
Let the generator of the
cyclic group $\mathbb{Z}_{n+1}$ act on $\com^2$ by
$$
(z_1,z_2)\mapsto  (\zeta\, z_1, \zeta^{-1}z_2)\,.
$$
Let  $\bA_n$ be  the minimal resolution of the quotient
$$
\bA_n \rightarrow \com^2/\mathbb{Z}_{n+1}.
$$
The diagonal $(\C^*)^2$-action on $\com^2$ commutes
with the action of $\mathbb{Z}_n$. As a result, the
surfaces $\bA_n$ are toric.

The starting point of our paper is the $(\C^*)^2$-equivariant
GW/DT correspondence for the 3-folds $\bA_n\times \Pp$.
In fact, there
exists a conjectural
triangle of equivalences. 
The apex in that triangle is given by the $(\C^*)^2$-equivariant
quantum cohomology of the Hilbert scheme of points of $\bA_n$.

\begin{figure}[hbtp]\psset{unit=0.5 cm}
  \begin{center}
    \begin{pspicture}(-6,-2)(10,6)
    \psline(0,0)(2,3.464)(4,0)(0,0)
    \rput[rt](0,0){
        \begin{minipage}[t]{3.64 cm}
          \begin{center}
            Gromov-Witten \\ theory of $\bA_n  \times \Pp$
          \end{center}
        \end{minipage}}
    \rput[lt](4,0){
        \begin{minipage}[t]{3.64 cm}
          \begin{center}
             Donaldson-Thomas\\ theory of $\bA_n \times \Pp$
          \end{center}
        \end{minipage}}
    \rput[cb](2,4.7){
        \begin{minipage}[t]{4 cm}
          \begin{center}
           Quantum cohomology \\ of $\Hilb(\bA_n)$
          \end{center}
        \end{minipage}}
    \end{pspicture}
  \end{center}
\end{figure}

The triangle is established for $\bA_0 = \C^{2}$
in the series of papers \cite{BP,OP6,GWDT}.
For the $\bA_{n\geq 1}$ geometries,
the triangle is 
proven{\footnote{In fact, assuming a conjectural
 nondegeneracy statement for the divisorial operators, 
the full equivalence of
the triangle follows, see \cite{mo1}.}}
for a special class of operators
associated to divisors of $\bA_n$ in \cite{dmt, mo1, mo2}.
The equivalence for divisorial operators
 is sufficient for our purposes.
The Hilbert scheme vertex plays a crucial role in
the establishment of the triangle.

\subsection{Plan of the paper}

We prove the GW/DT correspondence 
for  toric $3$-folds $X$ by the following method.
We  decompose the invariants on
both sides in terms of vertex and edge contributions of 
the polytope associated to $X$. Such a decomposition
is obtained from the localization formulas for the respective
virtual classes
\cite{GraberP}. A crucial step is to reorganize the
localization data into {\em capped}  vertex and edge contributions
discussed in Section 2.
The capped contributions are proven to be much better behaved ---
the capped contributions themselves satisfy the GW/DT correspondence.
The Gromov-Witten and Donaldson-Thomas
capped
edges are matched in Section 3.3
by strengthening the local curves correspondence
of \cite{BP,GWDT}. The capped vertices are matched
in Sections 3.4-3.6
 by exploiting the $\bA_n\times \Pp$
geometries for $n\leq 2$ and using the established
properties of the triangle of equivalences for divisorial operators.

Several results, conjectures, and speculations about
 the capped vertex are discussed in Section 4.
The connection of our work to the topological vertex of \cite{AKMV} 
is explained in Section 4.1. Theorem 1 is recast symmetrically
in terms
of classes on the Chow variety of $X$ in Section 4.2.
Finally, examples of capped vertex evaluations are
given in Section 4.3.

\subsection{Acknowledgments}
We thank J. Bryan,  J. Li, M. Lieblich, C.-C. Liu, 
N. Nekrasov, and R. Thomas for many discussions
about the correspondence for toric varieties.

D.~M. was partially supported by an NSF graduate fellowship and
the Clay foundation. 
A.~Ob. was partially supported by the NSF.
A.~Ok. and R.~P. were partially supported
by the Packard foundation and the NSF.

\section{Localization}
\subsection{Toric geometry}
Let $X$ be a nonsingular toric $3$-fold.
For both Gromov-Witten and Donaldson-Thomas theory, 
virtual localization with respect to the
action of the full 3-dimensional torus $T$ reduces all invariants
of $X$
to local contributions of the vertices and edges of the 
associated toric polytope. 
However, the standard
constituent pieces of the localization formula
are poorly behaved with respect to the GW/DT correspondence.  
Instead,
we will introduce a modified localization procedure 
with capped versions of the usual vertex and edge contributions.  
The capped vertex and edge terms are still 
building blocks for global toric calculations, 
but have the advantage of satisfying the GW/DT correspondence
for residues.

Let $\Delta$ denote the polytope associated to
$X$. The vertices of $\Delta$ are in bijection
with $T$-fixed points $X^T$.
The edges $e$ correspond
to $T$-invariant curves $$C_e\subset X.$$ The three edges
incident to any vertex carry canonical $T$-weights ---
the tangent weights of the torus action.

We will consider both compact and noncompact toric
varieties $X$. In the latter case, edges 
maybe compact or noncompact.
Every compact edge is incident to two vertices.

\subsection{Standard localization}

\subsubsection{Structure}

The large-scale structure of the $T$-equivariant localization
formulas in Gromov-Witten and
Donaldson-Thomas theories \cite{GraberP} is nearly identical.
The outermost sum in the localization formula
runs over all assignments of partitions $\lambda(e)$ to
the compact edges $e$ of $\Delta$ satisfying
$$
\beta = \sum_e |\lambda(e)|\cdot \left[C_e\right] \in H_2(X,\mathbb{Z})\,.
$$
Such a partition assignment will be called a \emph{marking} of
$\Delta$.
The weight of each marking in the localization sum for the
invariants \eqref{prgw} and \eqref{prdt} equals the product
of three factors:
\begin{enumerate}
\item[(i)] localization of the integrand,
\item[(ii)] vertex contributions,
\item[(iii)] compact edge contributions.
\end{enumerate}

The complexity of the localization formula is concentrated
in the vertex contributions (ii). The principal ingredients of
the GW vertex are \emph{triple Hodge integrals}.{\footnote{An introduction
to Hodge integrals in Gromov-Witten theory can be found in
\cite{FP}. See \cite{AKMV} for the triple Hodge integrals
which occur in the GW vertex in the Calabi-Yau case.}} 
The DT vertex is a weighted sum over all 3-dimensional
partitions $\pi$ with fixed asymptotic cross-sections. 
The weight of $\pi$ is a rational function of the
equivariant parameters, see \cite{mnop1,mnop2}.
For the computation of the reduced invariants, the \emph{reduced
vertex}, in which the appropriate power of the McMahon
function has been taken out, is more convenient.

We will avoid the fine structure of the GW and DT vertices. 
A precise match will be proven between
\emph{capped} vertices in the two theories.
The capped vertices are $T$-equivariant residue invariants of 
open 1-vertex geometries defined in Section \ref{sCapV}.

\subsubsection{Gromov-Witten edges}
Let $e$ be an compact edge of $\Delta$ as
in Figure \ref{f_edge}. The edge labels in Figure \ref{f_edge}
are the weights of the torus action.
\psset{unit=0.3 cm}
\begin{figure}[!htbp]
  \centering
   \begin{pspicture}(0,0)(14,10)
\psline{->}(3,5)(0,0)
\psline{->}(3,5)(0,10)
\psline(3,5)(11,5)
\psline{->}(11,5)(14,0)
\psline{->}(11,5)(14,10)
\rput[tr](-0.2,9.8){$t_1$}
\rput[br](-0.2,0.2){$t_2$}
\rput[tl](14.3,9.8){$t'_1$}
\rput[bl](14.3,0.2){$t'_2$}
\rput[bl](3.8,5.3){$t_3$}
\rput[tr](10.6,4.7){$-t_3$}
\end{pspicture}
 \caption{An edge in the toric polytope of $X$}
  \label{f_edge}
\end{figure}
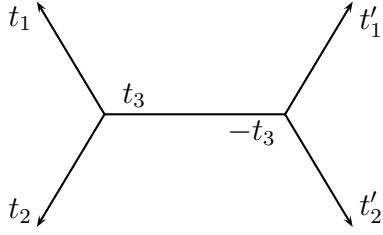

Let $e$ be marked by a partition $\lambda$. In Gromov-Witten
theory, such an edge corresponds to a $T$-fixed stable map
$$
f: C \to C_e \subset X\,,
$$
where the source curve
$C$ has a rational component $C_i$ for each part of $\lambda$. Restricted
to $C_i$, the map $f$ has the form
$$
\Pp\owns z \mapsto z^{\lambda_i} \in \Pp\,.
$$
The corresponding edge weight
$$
E_{GW}( \lambda,  t_1,t_2,t_3,t'_1,t'_2,u)
$$
equals $u^{-\chi(C)}$ times
the stack factor
$$
\zz(\lambda)^{-1} =
|\Aut f|^{-1}
$$
times the product of nonzero $T$-weights on
\begin{equation}\label{fred}
H^1(f^*(TX))\ominus H^0(f^*(TX)) 
\end{equation}
corrected by the $T$-weights of the tangent fields of $C$
vanishing at the end points.
The character of the virtual $T$-module \eqref{fred} may be easily
calculated for each component $C_i$ to be
\begin{equation}
  \label{edge_char}
\frac{e^{t_1} + e^{t_2}+e^{t_3}}{e^{-t_3/\lambda_i}-1}
+
\frac{e^{t'_1} + e^{t'_2}+e^{-t_3}}{e^{t_3/\lambda_i}-1} + 1 \,.
\end{equation}
Here we denote by $e^t\in \C[T]$ the irreducible
character corresponding to a weight $t\in\ft^*$.
The trivial character $1$, which is taken out in \eqref{edge_char},
is due to the tangent fields of $C$.

\subsubsection{Donaldson-Thomas edges}

In Donaldson-Thomas theory, a marked edge corresponds
to a $T$-fixed ideal sheaf $\cI_\lambda$ which restricts to a
monomial ideal of the form
\begin{equation}
I_\lambda = (x^i y^j)_{i\ge \lambda_{j+1}} \subset \C[x,y,z]
\label{mon_ideal}
\end{equation}
in the neighborhood of
each of the two $T$-fixed point. Here $z$ is the coordinate
along the edge.

Because of the asymmetry between
$x$ and $y$ in \eqref{mon_ideal}, the partition $\lambda$ should
properly be assigned to an oriented edge, with the change in
orientation resulting in the transposition of the partition.
One of the many advantages of \emph{capped localization}, which
will be introduced below, is absence of orientation issues.

By definition, the edge weight
$$
E_{DT}( \lambda, t_1,t_2,t_3,t'_1,t'_2,q)
$$
in Donaldson-Thomas theory
is $q^{\chi(\cO/\cI_\lambda)}$ times the product of $T$-weights on
\begin{equation}\label{bobb}
\Ext^2(\cI,\cI)\ominus \Ext^1(\cI,\cI)\,.
\end{equation}
Again, the character of the virtual $T$-module \eqref{bobb}
is easily
determined, see \cite{mnop1,mnop2}.

\subsection{Capped edges and vertices}\label{sCapV}

\subsubsection{Edges}
Let $e$ be a compact edge of the toric polytope corresponding to $X$.
Let $$X_e\subset X$$
 denote the non-compact toric variety 
 associated to $e$ determined by Figure 1.
Let 
$$F_0,F_\infty \subset X_e$$
be the $T$-invariant divisors lying over the
fixed points of the base $\Pp\subset X_e$.

By definition, the Gromov-Witten capped edge $e$,
\begin{equation}\label{cftt}
\bE_{GW}(\lambda,\mu,t_1,t_2,t_3,t_1',t_2',u)= \mathsf{Z}'_{GW}(X_e/
F_0 \cup F_\infty,u\ | \ \lambda,\mu),
\end{equation}
is the reduced $T$-equivariant partition function of
$X_e$ with free{\footnote{By free a relative condition,
we mean a partition with all cohomology weights 1.
Hence, just a partition.}} relative conditions $\lambda$
and $\mu$ imposed along $F_0$ and $F_\infty$ of degree
$$d=|\lambda|= | \mu |.$$
The $T$-fixed loci of the moduli space $\overline{M}'_{g}(X_e/F_0\cup F_\infty,d)$ are compact. Hence,
the partition function \eqref{cftt}
is well-defined by $T$-equivariant
residues, see \cite{BP}.

The definition of a capped edge in Donaldson-Thomas
 theory is identical,
\begin{equation}
\bE_{DT}(\lambda,\mu,t_1,t_2,t_3,t_1',t_2',u)= \mathsf{Z}'_{DT}(X_e/F_0
\cup F_\infty,q\ | \ \lambda,\mu),
\label{vvtt}
\end{equation}
again defined by $T$-equivariant residues.

The capped edge 
has normal bundle of type $(a,b)$ where
$$
(a,b) =  \left(\tfrac{t_1-t'_1}{t_3},\tfrac{t_2-t'_2}{t_3}\right) \in
\Z^2 \,.
$$
We will call the geometry here an  $(a,b)$-edge.
The toric variety $X_e$ is isomorphic to the
total space of the bundle
\begin{equation}\label{OaOb}
\mathcal{O}_{\Pp}(a)
\oplus \mathcal{O}_{\Pp}(b) \rightarrow \Pp.
\end{equation}

\subsubsection{Vertices}\label{s_def_CV}

Let $U$ be the $T$-invariant 3-fold obtained
by removing the three $T$-invariant  lines 
$$
L_1,L_2,L_3 \subset \Pp \times \Pp \times \Pp
$$
passing through the point $(\infty,\infty,\infty)$,
$$U = \Pp \times \Pp \times \Pp \ \setminus \ \cup_{i=1}^3 L_i.$$ 
Let $D_i \subset U$
be the divisor with $i^{th}$ coordinate $\infty$.
For $i\neq j$, the divisors ${D}_i$ and $D_j$
are disjoint.

In both Gromov-Witten and Donaldson-Thomas theories,
the capped vertex is the \emph{reduced}
partition function of $U$ with free relative conditions
imposed at the divisors ${D}_i$. 
While the relative geometry $U/\cup_i D_i$
is noncompact, the moduli spaces of maps 
$\overline{M}'_g(U/\cup_i D_i,\beta)$
and ideal sheaves $I_n(U/\cup_i D_i,\beta)$
have compact $T$-fixed loci.
The invariants of $U/\cup_i D_i$ in both theories are
well-defined by $T$-equivariant residues.
In the localization
formula for the reduced theories of 
 $U/\cup_i D_i$,
 nonzero degrees can occur {\em only} on the edges meeting
the origin $(0,0,0)\in Y$.

We
denote the capped GW vertex by
\begin{equation}\label{njp}
\bC_{GW}(\lambda,\mu,\nu,t_1,t_2,t_3,u) = \mathsf{Z}_{GW}'
(U/\cup_i D_i,u\ | \ \lambda,\mu,\nu)
\end{equation}
where $\lambda,\mu,\nu$ denote relative conditions imposed at $D_1,D_2,D_3$
and $t_1,t_2,t_3$ are the weights of the $T$-action on the
coordinate axes.

The definition of the capped DT vertex,
$$
\bC_{DT}(\lambda,\mu,\nu,t_1,t_2,t_3,q)
=
\mathsf{Z}_{DT}'(U/\cup_i D_i,q\ | \ \lambda,\mu,\nu),
$$
is parallel. The partitions $\lambda,\mu,\nu$ here represent the
Nakajima basis elements of
$$
H^*(\Hilb(D_i,[\beta]\cdot D_i),\mathbb{Q})\,, \quad i=1,\dots,3\,.
$$
Unlike the uncapped DT vertex, $\bC_{DT}$
enjoys a full $S(3)$ symmetry extending the obvious $S(3)$ action on
$X$.

\subsection{Capped localization}

\subsubsection{Overview}
\label{bgty}
Capped localization expresses the primary Gromov-Witten
 and Donaldson-Thomas
invariants of $X$ as a sum capped vertex and capped edge
data.

A half-edge $h=(e,v)$
 is a compact edge $e$ together with the
choice of an incident vertex $v$.
A partition assignment
$$h \mapsto \lambda(h)$$
to half-edges is {\em balanced} if the equality
$$|\lambda(e,v)| = |\lambda(e,v')|$$
always holds 
for the two halfs of  $e$.
For a balanced assignment, let $$|e|=|\lambda(e,v)|=|\lambda(e,v')|$$
denote the {\em edge degree}.

The outermost sum in the capped localization formula
runs over all balanced 
assignments of partitions $\lambda(h)$ to
the half-edges $h$ of $\Delta$ satisfying
\begin{equation}\label{ddfff}
\beta = \sum_e |e|\cdot \left[C_e\right] \in H_2(X,\mathbb{Z})\,.
\end{equation}
Such a partition assignment will be called a \emph{capped
marking} of
$\Delta$.
The weight of each capped marking in the localization sum for the
invariants \eqref{prgw} and \eqref{prdt} equals the product
of four factors:
\begin{enumerate}
\item[(i)] localization of the integrand,
\item[(ii)] capped vertex contributions,
\item[(iii)] capped edge contributions,
\item[(iv)] gluing terms.
\end{enumerate}

The integrand terms (i)  will be discussed in Section \ref{s_integr}
below. The integrand contributions in Gromov-Witten
and Donaldson-Thomas theories exactly match.
Each vertex determines up to three half-edges specifying the
partitions for the capped vertex.
Each compact edge determines two half-edges specifying
the partitions of the capped edge.
The gluing terms (iv) appear in exactly the same form
as in the degeneration formula.
Precise formulas are written in Section \ref{exxx}.

The capped localization formula
is easily derived from the standard localization formula.
Indeed, the capped objects are obtained from the
uncapped objects by  rubber integral{\footnote{
Rubber integrals $\langle \lambda \ |\ \frac{1}{1-\psi_\infty} \ |  
\ \mu \rangle ^\sim$ 
arise in the localization formulas for relative
geometries.
See
\cite{BP,GWDT} for detailed discussion of the 1-leg
rubber calculus. In particular, differential
equations governing the rubber integrals are discussed in
Section 11.2 of \cite{GWDT}.
}} factors.
The rubber integrals cancel in pairs in capped localization
to yield standard localization.

The GW/DT correspondence for
$X$ is a direct consequence of the capped localization
formula and the following results matching
the capped 
edges and vertices.

\begin{Proposition}\label{Ted} Capped edges satisfy the 
relative GW/DT correspondence. After
the substitution $q=-e^{iu}$, we have
\begin{multline*}
(-iu)^{d(a+b)+\sum_i \ell(\lambda^{(i)})}
\bE_{GW}(\lambda^{(1)},\lambda^{(2)},t_1,t_2,t_3) = \\
q^{-\frac{d(a+b+2)}{2}}
\bE_{DT}(\lambda^{(1)},\lambda^{(2)},t_1,t_2,t_3)\,
\end{multline*}
for an $(a,b)$-edge of degree $d$.
Capped edges are rational functions of $q$.
\end{Proposition}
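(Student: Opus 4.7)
The plan is to reduce Proposition \ref{Ted} to the Gromov-Witten/Donaldson-Thomas correspondence for local curves proven in \cite{BP, GWDT}, strengthening the latter to cover arbitrary free relative conditions and all bundle splittings $(a,b)$.

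First, I would verify that the prefactors in the statement are precisely what the general relative conjecture \eqref{relcor} predicts for the capped edge. For $X_e = \mathcal{O}_{\Pp}(a) \oplus \mathcal{O}_{\Pp}(b) \to \Pp$ carrying degree $d$, the virtual dimension is $\bd = d(a+b+2)$. Collapsing $\lambda^{(1)}$ and $\lambda^{(2)}$ into a single combined relative partition $\mu$ with $|\mu|=2d$ and $\ell(\mu)=\ell(\lambda^{(1)})+\ell(\lambda^{(2)})$, the predicted GW exponent
$$
\bd+\ell(\mu)-|\mu| = d(a+b+2) + \ell(\lambda^{(1)})+\ell(\lambda^{(2)}) - 2d = d(a+b) + \sum_i \ell(\lambda^{(i)})
$$
matches, as does the DT exponent $-\bd/2 = -d(a+b+2)/2$. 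So the statement of Proposition \ref{Ted} is the content of \eqref{relcor} specialized to the local curve geometry $X_e / F_0 \cup F_\infty$.

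Next, I would degenerate the base $\Pp$ of the capped edge into a chain of three components, choosing the line bundle splittings on the special fiber so that the central piece has trivial normal bundle $(0,0)$ while the two outer pieces absorb all the twisting as an $(a,0)$-cap and a $(0,b)$-cap. Applying the degeneration formulas of Section \ref{sdeg} to both theories expresses $\bE_{GW}$ and $\bE_{DT}$ as sums over free intermediate relative partitions, with the combinatorial gluing factors $\zz(\mu) u^{2\ell(\mu)}$ and $(-1)^{|\mu|-\ell(\mu)}\zz(\mu) q^{-|\mu|}$ respectively. A direct check confirms these gluing factors are compatible under $e^{iu}=-q$ after accounting for the exponents computed above, so it suffices to establish the correspondence for each of the three atomic pieces. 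The trivial tube is the fully $T$-equivariant local $\Pp$ of \cite{BP, GWDT}, whose correspondence and rationality in $q$ are known; each twist cap is a relative local $\Pp$ in which one direction of the normal bundle is trivial, again falling within the scope of \cite{BP, GWDT}. The rubber calculus of Section 11.2 of \cite{GWDT}, together with the $T$-equivariant Hilbert scheme vertex, supplies the tools to convert between the Nakajima-indexed relative conditions that arise naturally on the DT side and the partition-indexed free conditions that appear in $\bE_{DT}$.

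The main obstacle is extending the local curves correspondence of \cite{BP, GWDT} to the fully $T$-equivariant setting with arbitrary free relative conditions on both ends, rather than the Calabi-Yau or specialized settings explicitly treated there; this likely requires a separate induction on the relative data, handled in Section 3.3 of the paper. Once these cases are in hand, the rationality of $\bE_{DT}$ as a function of $q$ follows immediately: rationality of the atomic pieces is preserved under the finite sums and polynomial-in-$q$ gluing of the degeneration formula.
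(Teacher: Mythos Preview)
Your proposal correctly identifies the starting point---the $(\C^*)^2$-equivariant correspondence of \cite{BP,GWDT}---and correctly flags the extension to the full $T$-action as the essential difficulty. But the proposal does not actually address that difficulty; you defer it to ``a separate induction \dots handled in Section 3.3 of the paper,'' which is precisely the content of Proposition~\ref{Ted}. Two concrete gaps remain.

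First, your degeneration into an $(a,0)$-piece, a $(0,0)$-tube, and a $(0,b)$-piece does not reduce to known cases. The outer pieces are still capped edges with nontrivial normal bundle, and nothing in \cite{BP,GWDT} covers them for the full $3$-torus. The paper instead reduces every $(a,b)$-edge, by iterated $T$-equivariant degeneration, to the two atomic cases $(0,0)$ and $(0,-1)$ (and its reflection $(-1,0)$). The $(0,0)$-tube is then disposed of by the idempotent argument of Section~\ref{tubcal}: the tube operator is forced to equal the identity in both theories.

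Second, and more seriously, you provide no mechanism to pass from the fiberwise $(\C^*)^2$-correspondence to the full $T$-correspondence for the remaining $(0,-1)$-edge. The paper's key idea is to insert the descendent $\sigma_1(1)$ on the DT side and $\tau_1(1)$ on the GW side, convert these via the divisor relation $t_3\cdot 1 = [F_0]-[F_\infty]$ and the rubber equation~\eqref{gamma0} into first-order linear ODEs in $q$, and show that both theories satisfy the \emph{same} equation~\eqref{mmmq} governed by the operator $\MM$. The correspondence is then proven by a cyclic-vector argument: the vector $\left|1^d\right\rangle$ lies in the matching locus because the associated bracket~\eqref{hyy7} is a nonequivariant scalar (so the known $(\C^*)^2$ result applies directly), and the iterates $\nabla^k\left|1^d\right\rangle$ span $\cF_d$. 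None of this structure appears in your sketch.
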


\begin{Proposition}\label{Tccor} Capped vertices satisfy the 
relative GW/DT correspondence. After
the substitution $q=-e^{iu}$, we have
\begin{multline*}
(-iu)^{\sum_i |\lambda^{(i)}|+\ell(\lambda^{(i)})}
\bC_{GW}(\lambda^{(1)},\lambda^{(2)},\lambda^{(3)},t_1,t_2,t_3) = \\
q^{-\sum |\lambda^{(i)}|}
\bC_{DT}(\lambda^{(1)},\lambda^{(2)},\lambda^{(3)},t_1,t_2,t_3)\,.
\end{multline*}
Capped vertices are rational functions of $q$.
\end{Proposition}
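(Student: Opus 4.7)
The plan is to prove the GW/DT correspondence for capped vertices by bootstrapping from the known correspondence for the $\bA_n\times\Pp$ geometries with $n\leq 2$, established via the triangle of equivalences of Section \ref{angeo}. The central idea is that capped localization expresses the equivariant partition functions of $\bA_n\times\Pp$ as sums over capped markings, each weighted by products of capped vertices and capped edges together with gluing and integrand factors. By Proposition \ref{Ted}, the capped edge factors already obey the GW/DT correspondence and are rational in $q$. Since the total partition function of $\bA_n\times\Pp$ matches on both sides via the triangle, the combined capped vertex contribution must match as well; the task is to upgrade this aggregate matching to the leg-by-leg matching of a single capped vertex.

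The argument proceeds by induction on the total size $\sum_i|\lambda^{(i)}|$ of the three legs. The base case of trivial partitions reduces to the classical $\C^3$ vertex comparison, already handled for $\bA_0=\C^2$ in \cite{BP,GWDT}. For the inductive step, I would choose curve classes on $\bA_n\times\Pp$ so that at a distinguished vertex of its polytope the three emanating edges carry the prescribed partitions $\lambda^{(1)},\lambda^{(2)},\lambda^{(3)}$. Capped localization then yields a linear equation expressing the desired capped vertex in terms of lower-weight vertices (known by induction), capped edges (controlled by Proposition \ref{Ted}), and the full partition function of $\bA_n\times\Pp$. Varying $n\in\{0,1,2\}$, the curve class on $\Pp$, the splitting of insertions across vertices, and the divisorial insertions on $\bA_n$ produces a family of such relations. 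The $S(3)$ symmetry of the capped vertex, absent in the uncapped version, allows any of the three legs to play the role of the $\Pp$ direction, so arbitrary three-leg data can be accessed through two-leg configurations at a single vertex of $\bA_n\times\Pp$.

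The main obstacle will be showing that the linear system produced this way is nondegenerate, i.e.\ that divisorial operators on $\Hilb(\bA_n)$ for $n\leq 2$ generate enough of the relevant operator algebra on Fock space to distinguish capped vertices with different leading partitions. This is precisely where the full strength of the triangle of equivalences for divisorial operators \cite{dmt,mo1,mo2} enters: it converts the capped vertex matching into a matrix identity on $\bigoplus_d H^*(\Hilb(\bA_n,d))$, and the nondegeneracy of the divisor action on this space provides the needed invertibility. Rationality in $q$ of $\bC_{GW}$ and $\bC_{DT}$ follows a posteriori: capped localization exhibits each vertex factor as a rational combination of quantities already known to be rational, namely the $\bA_n\times\Pp$ partition functions, the capped edges of Proposition \ref{Ted}, and, by induction, capped vertices of strictly smaller total weight.
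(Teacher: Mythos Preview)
Your overall architecture --- $\bA_n$ geometries for $n\le 2$, capped localization, and an inductive extraction of capped vertices --- matches the paper, but you have misplaced two key ingredients, and one of them is a genuine gap.

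First, the input geometry is the $\bA_n$-\emph{cap} (relative only to $F_\infty$), not the tube relative to both fibers. The tube is the identity operator (Section~\ref{tubcal}) and gives no information about capped vertices. For the cap, the GW/DT correspondence is proven directly by a dimension count (Section~\ref{scap}): the only nonvanishing cap invariant has relative condition $\mathbf{1}^d$ and is a nonequivariant scalar. Capped localization of the $\bA_1$-cap then expresses this known quantity through two 2-leg capped vertices, a capped edge, and a \emph{capped rubber} factor over $\infty$. The divisorial triangle of \cite{dmt,mo1,mo2} enters only here, via Theorem~\ref{mmoo}, to prove that the capped rubber satisfies the correspondence (Lemma~\ref{Pcapr}) through the differential equation~\eqref{ndivCR}. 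It does not play the invertibility role you assign it.

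Second --- and this is the real gap --- your proposed source of invertibility, that divisorial operators on $\Hilb(\bA_n)$ act nondegenerately enough on Fock space to separate capped vertices, is precisely the conjectural statement the paper explicitly avoids assuming (see the footnote in Section~\ref{angeo}). The invertibility actually used is Lemma~\ref{fundlem}: the matrix $[\bC^*(\lambda,\mu,\emptyset)]$ with $|\lambda|=n$, $|\mu|<n$ has maximal rank. Its proof is purely combinatorial: specialize to the Calabi-Yau subtorus $t_1+t_2+t_3=0$, where the uncapped DT vertex is explicit, and reduce to showing the matrix of skew Schur functions $[s_{\lambda/\eta}(q^\rho)]$ has full rank, which follows from an elementary argument with the operators $p_k^\perp$. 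The induction is correspondingly on $\min(|\lambda|,|\nu|)$ for 2-leg vertices (base case the 1-leg $\bA_0$-cap), and the 3-leg case then follows from the $\bA_2$-cap using the 2-leg nondegeneracy already in hand.
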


Proposition \ref{Ted} has been proven in \cite{BP,GWDT}
for the vertical subtorus $$(\com^*)^2 \subset T.$$
The proof for the full 3-dimensional torus $T$ is given
in Section \ref{capcor}.
Proposition \ref{Tccor}, proven in Sections \ref{pr1}-\ref{pr2},
is really the
main result of the paper. 
Both Propositions \ref{Ted} and \ref{Tccor} are
special cases of the GW/DT correspondence for
$T$-equivariant residues conjectured in \cite{BP}.

Our proof of Proposition \ref{Tccor}
may be converted into an algorithm
for the computation of the capped vertices.  
Capped vertex evaluations in the first few
cases may be found in Section 4.3. 

From the perspective of the GW/DT correspondence,
the starred{\footnote{Here
we follow the notation of Section 9.2 of \cite{GWDT}.}}  
normalizations
\begin{eqnarray*}
\bE_{GW}^*(a,b)_{\lambda^{(1)},\lambda^{(2)}} &= &
(-iu)^{d(a+b)+\sum_i \ell(\lambda^{(i)})} 
\bE_{GW}(\lambda^{(1)},\lambda^{(2)},t_1,t_2,t_3)\ , \\
\bE_{DT}^*(a,b)_{\lambda^{(1)},\lambda^{(2)}} & = &
q^{-\frac{d(a+b+2)}{2}}
\bE_{DT}(\lambda^{(1)},\lambda^{(2)},t_1,t_2,t_3)
\end{eqnarray*}
for capped edges and 
\begin{eqnarray*}
\bC_{GW}^*(\lambda^{(1)},\lambda^{(2)},\lambda^{(3)}) & = & 
(-iu)^{\sum_i |\lambda^{(i)}|+\ell(\lambda^{(i)})}
\bC_{GW}(\lambda^{(1)},\lambda^{(2)},\lambda^{(3)},t_1,t_2,t_3)\ , \\
\bC_{DT}^*(\lambda^{(1)},\lambda^{(2)},\lambda^{(3)})
& = &
q^{-\sum |\lambda^{(i)}|}
\bC_{DT}(\lambda^{(1)},\lambda^{(2)},\lambda^{(3)},t_1,t_2,t_3)
\end{eqnarray*}
for capped vertices
will be more convenient.

\subsubsection{Localization of the integrand}\label{s_integr}

We  identify here the localization contributions of the
primary insertions. Let $v$ be a vertex of $\Delta$.
We will denote by the corresponding $T$-fixed point
by the same letter. Let $t_1,t_2,t_3$ be the
tangent weights at $v$ and let $d_1,d_2,d_3$ be
edge degrees determined by the capped marking.

Consider a primary insertion $\gamma\in H^*_T(X,\mathbb{Z})$.
Since the fixed points form a basis of $H^*_T(X,\mathbb{Z})$
after localization,
we may restrict our attention to
$$
\gamma = [v]\,.
$$
The pull-back  of $\gamma$ to $v$ is given by
$$
[v]\Big|_{v} = t_1 t_2 t_3 \,.
$$

\begin{Proposition}\label{llccv}
The localization of the incidence to $[v]$ in both
Gromov-Witten  and Donaldson-Thomas theories is
$$
t_1 t_2 d_3 + t_1 d_2 t_3 + d_1 t_2 t_3 \,.
$$
\end{Proposition}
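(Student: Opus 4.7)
The plan is to verify the identity separately in the two theories, exploiting the fact that $[v]$ is supported only at $v$, so the computation is local near $v$ and reduces to an identity in the tangent weights $t_i$ and the edge degrees $d_i$.

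For Donaldson--Thomas, I would begin from the definition
\[
c_2(\gamma)(\xi)=\pi_{1*}\bigl(c_2(\mathfrak{I})\cdot \pi_2^*\gamma\cap \pi_1^*\xi\bigr).
\]
On a $T$-fixed ideal sheaf $\mathcal{I}$, the class $c_2(\mathfrak{I})$ restricts to $-\mathrm{ch}_2(\mathcal{I})$, and since 0-dimensional embedded points lie in the kernel of $\mathrm{ch}_2$, this restriction equals the class $[C]$ of the 1-dimensional part of the associated subscheme. For a capped marking with edge degrees $d_i$ at $v$, this 1-cycle is $\sum_i d_i\,[C_{e_i}]$ in a neighborhood of $v$. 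Equivariant localization on $X$ then yields
\[
\int_X [C]\cdot[v]=\frac{[C]|_v\cdot t_1 t_2 t_3}{t_1 t_2 t_3}=[C]|_v=\sum_{i=1}^3 d_i\prod_{j\neq i}t_j,
\]
using that each $[C_{e_i}]|_v$ equals the product of the two normal weights to $e_i$ at $v$. This matches the claimed formula.

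For Gromov--Witten, I would argue that a marked point carrying $\mathrm{ev}^*[v]$ must sit at a $T$-fixed point of the domain mapping to $v$. On the $i$-th edge component of degree $d_i$ the unique such position is the ramification point $p_i$ over $v$; placements on contracted components at $v$ are absorbed into the capped vertex $\bC_{GW}$ itself and produce no separate integrand factor. At $p_i$ the insertion evaluates to $[v]|_v=t_1 t_2 t_3$, while attaching the marked point to the degree-$d_i$ edge component modifies the edge localization by the Jacobian factor $d_i/t_i$ coming from the cover $z\mapsto z^{d_i}$. The net contribution at $p_i$ is thus $d_i\prod_{j\neq i}t_j$, and summing over the three edges reproduces the Donaldson--Thomas answer.

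The principal subtlety is the Gromov--Witten bookkeeping: correctly separating the contribution of the insertion from the rubber integrals absorbed into the capped vertex $\bC_{GW}$ and capped edge $\bE_{GW}$, and confirming that the only surviving integrand factor is the ramification Jacobian $d_i/t_i$ on the edge carrying the marked point. This step is a direct application of the one-leg rubber calculus of \cite{BP,GWDT}; once the rubber moduli on the edge are stripped off, the residual contribution of the marked point is exactly the factor $d_i/t_i$, so that the Gromov--Witten integrand localization agrees term-by-term with the Donaldson--Thomas one.
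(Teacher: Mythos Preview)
Your Donaldson--Thomas argument is correct and more conceptual than the paper's. You use the general fact that $-\ch_2(\cI)=\ch_2(\cO_Y)$ equals the equivariant cycle class of the one-dimensional support of $Y$ (zero-dimensional pieces sit in codimension three and do not contribute to $\ch_2$), then pair with $[v]$ by localization on $X$. The paper instead proves the identity (Lemma~\ref{tyt}) by writing down a graded free resolution of $\cI$ and extracting the quadratic term of the resulting character formula for $\cO_X/\cI$. Both are fine; yours is shorter, the paper's is more self-contained.

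Your Gromov--Witten argument has a genuine gap. In the $T$-fixed loci of $\oM'_{g,r}(X,\beta)$ the marked point carrying $[v]$ lies on the \emph{contracted} component at $v$, not at a ramification point of an edge cover: the preimages of $v$ on the edge components are nodes, not markings. Your assertion that placements on contracted components are ``absorbed into $\bC_{GW}$ and produce no separate integrand factor'' is incorrect --- by its definition \eqref{njp} the capped vertex carries no primary insertions, so the entire effect of the marking must appear in $\mathsf{I}_\Gamma$. The paper's argument (stated there in one line) is the string equation: adding the extra marked point to the vertex Hodge integral, with its node factors $\prod 1/(w-\psi)$, yields by the string relation the multiplicative factor $\sum 1/w$, summed over all edge flags at $v$. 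Since the flag of a part $\lambda_j$ in direction $i$ has weight $w=t_i/\lambda_j$, this sum is $\sum_i \sum_j \lambda_j/t_i=\sum_i d_i/t_i$, and multiplying by $[v]|_v=t_1t_2t_3$ gives the claimed expression. No rubber calculus is needed.
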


  Since the Donaldson-Thomas techniques are not as well-known, we
derive the insertion term there. The corresponding
computation of the Gromov-Witten theory is a standard application of the
string equation.

We may assume that $X=\C^3$.
Consider the universal ideal sheaf \eqref{uIs}
and let
$$
\cI \subset \cO_X = \C[x_1,x_2,x_3]
$$
be a $T$-fixed ideal with degrees $d_i$ along
the coordinate axes. 
Proposition \ref{llccv} is an immediate consequence of 
the following result.

\begin{Lemma} \label{tyt}
We have
\begin{equation}
c_2(\fI)\Big|_{[\cI]\times v} = t_1 t_2 d_3 + t_1 d_2 t_3 + d_1 t_2 t_3
\,.\label{ttd}
\end{equation}
\end{Lemma}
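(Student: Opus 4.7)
The plan is to reduce the computation to an equivariant Chern character calculation on $\C^3$ and carry it out via Koszul resolutions of the coordinate axes.

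First I will use that, since $c_1(\fI)=0$ on the universal side, the footnote already gives $c_2(\fI)=-\text{ch}_2(\fI)$. After restriction to $[\cI]\times v$ the universal ideal $\fI$ becomes the $T$-equivariant monomial ideal $\cI\subset\cO_{\C^3}$, so the task is to compute $-\text{ch}_2^T(\cI)|_0$ in $H^*_T(\text{pt})=\C[t_1,t_2,t_3]$. From the defining sequence $0\to\cI\to\cO_{\C^3}\to\cO_Y\to 0$ and the triviality of $\cO_{\C^3}$, additivity of the Chern character then reduces the problem to showing $\text{ch}_2^T(\cO_Y)|_0 = d_1 t_2 t_3 + d_2 t_1 t_3 + d_3 t_1 t_2$.

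Next I will decompose $\cO_Y$ equivariantly. The monomial structure of $\cI$ produces a finite $T$-equivariant filtration of $\cO_Y$ whose associated graded is a direct sum of two kinds of pieces: (a) for each coordinate axis $L_k$ exactly $d_k$ copies of $\cO_{L_k}$, each twisted by some $T$-weight, and (b) finitely many $T$-equivariant skyscraper sheaves supported at the origin. By additivity of $\text{ch}$, I can compute the contribution of each graded piece separately. For $\cO_{L_k}$ with $\{i,j,k\}=\{1,2,3\}$, the Koszul complex on $(x_i,x_j)$ gives $\text{ch}_T(\cO_{L_k})|_0 = (1-e^{-t_i})(1-e^{-t_j})$, whose degree-$2$ component is $t_i t_j$; multiplying by the equivariant weight $e^{w}$ of any $T$-twist does not change the leading degree-$2$ term. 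For a skyscraper at $0$, the Koszul complex on $(x_1,x_2,x_3)$ yields a character divisible by $(1-e^{-t_1})(1-e^{-t_2})(1-e^{-t_3})$, which starts in degree $3$ and therefore contributes nothing to $\text{ch}_2$. Summing gives the claimed identity.

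The main obstacle is setting up the $T$-equivariant filtration of $\cO_Y$ and identifying the number of $\cO_{L_k}$-pieces as exactly $d_k$, which is really the content of the definition of $d_k$ as the multiplicity of $Y$ along the $k$-th axis; it can be realized concretely by filtering $\cO/\cI$ according to the lexicographic order on monomials, or by reading it off the character $\sum_{(a_1,a_2,a_3)\notin\cI} e^{-a_1 t_1-a_2 t_2-a_3 t_3}$ of $\cO/\cI$ and verifying the degree-$2$ extraction directly. Once this bookkeeping is done, the key structural point is the automatic vanishing of the 0-dimensional contribution to $\text{ch}_2$ at a $T$-fixed point of $\C^3$, which is what makes the answer depend only on the axial multiplicities $d_i$.
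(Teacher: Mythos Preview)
Your proposal is correct and takes essentially the same route as the paper's proof: both reduce to extracting the degree-$2$ part of $\ch^T(\cO_Y)|_0$, split the character of $\cO/\cI$ into the three infinite axial legs plus a finite remainder, and observe that the remainder is killed in degree~$\le 2$ by the factor $\prod_i(1-e^{-t_i})$. The only difference is packaging --- the paper writes one free resolution of $\cI$ and reads off the identity \eqref{Trpi} by direct box-counting, whereas you phrase the same decomposition as a $T$-equivariant filtration of $\cO_Y$ with Koszul resolutions on each graded piece.
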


\begin{proof}
Consider a graded free resolution of
length $3$,
\begin{equation}
0 \to R_3 \to R_2 \to R_1 \to \cI \to 0
\label{ResI}
\end{equation}
where
$$
R_i = \bigoplus_{k} x^{a_{ik}} \cO_X \,, \quad a_{ik} \in \Z^3 \,.
$$
Since $c_1(\fI)=0$, we have from \eqref{ResI}
\begin{equation}
c_2(\fI)|_{[\mathcal{I}]\times 0} = - \ch_2(\fI)|_{[\mathcal{I}]\times 0}
= \frac{1}{2}
\sum_{i,k} (-1)^i (t,a_{ik})^2 \,,
\label{c2fI}
\end{equation}
where $t=(t_1,t_2,t_3)$, and $(t, a)$ denotes the
standard inner product.

Let $\pi$ be a $3$-dimensional partition associated to $\cI$.
We may compute
the character of the $T$-action on
$\cO_X/\cI$ either directly in terms of $\pi$ or via the
resolution \eqref{ResI}. We obtain the identity
\begin{equation}
  \label{Trpi}
\prod_{i=1}^3 (1-e^{-t_i})\sum_{p\in \pi} e^{-(t,p)} =
1+ \sum_{i,k} (-1)^i e^{-(t, a_{ik})} \,.
\end{equation}
The right side of \eqref{c2fI} equals
the quadratic term in \eqref{Trpi}.
The sum over $p\in\pi$ of the left side of
\eqref{Trpi} may be split in 4 parts ---
the sums over $p$ in each of the three infinite legs of $\pi$
and the finite remainder. 
The infinite legs yield
geometric series which can be summed exactly.
Since
$$
\prod_{i=1}^3 (1-e^{-t_i}) = t_1 t_2 t_3 + O(t^4),
$$
the remainder does not contribute to the quadratic
term, while each infinite leg of $\pi$ contributes
the corresponding term to \eqref{ttd}.
\end{proof}

\subsubsection{Formulas}

\label{exxx}
Consider the 
capped localization formula 
for the partition function 
\begin{equation}\label{fq2}
\bZ'_{GW}\big(X,u\ \big|\ \gamma_1, \ldots, \gamma_r \big)_\beta
\end{equation}
with equivariant primary field insertions
$\gamma_{i} \in H_T^*(X,\mathbb{Z})$.

Let $\mathcal{V}$ be the set of vertices of $\Delta$. To each $v\in 
\mathcal{V}$,
let $h^{v}_1, h^v_2, h^v_2$ be the associated half-edges{\footnote{
For simplicity, we assume $X$ is projective so each vertex is
incident to 3 compact edges.}}
with tangent weights $t^v_1, t^v_2,t^v_3$ respectively.
Let $\Gamma_\beta$ be the set of capped markings
 satisfying the degree condition \eqref{ddfff}.
Each $\Gamma \in \Gamma_\beta$  associates  
a partition $\lambda(h)$ to every half-edge $h$. Let
$$|h| = |\lambda(h)|$$
 denote the half-edge degree.

The localization 
contributions of the insertions is easily identified. 
Let
$\gamma_i^v \in H^*_T(\bullet,\mathbb{Z})$
be the $T$-equivariant restriction of $\gamma_{i}$ to the
$T$-fixed point associated to $v$. 
The insertion term $\mathsf{I}_\Gamma$
is determined by
\begin{equation}\label{vrv}
\mathsf{I}_\Gamma =
\prod_{i=1}^r\ \sum_{v\in \mathcal{V}}
\gamma^v_i
\left(\frac{|h^v_1|}{t^v_1} +
\frac{|{h^v_2}|}{t^v_2}+
\frac{|{h^v_3}|}{t^v_3}\right)\ 
\in H^*_T(\bullet,\mathbb{Q})
\end{equation}
as a consquence of Proposition \ref{llccv}.

For each $v\in \mathcal{V}$, the assignment
$\Gamma$ determines an evaluation of the capped vertex,
$$\mathsf{C}_{GW}(v,\Gamma) = \mathsf{C}_{GW}(\lambda(h^v_1),
\lambda(h^v_2), \lambda(h^v_3), t^v_1, t^v_2, t^v_3, u).$$
Let
$h^e_1$ and $h^e_2$ be the half-edges
associated to the edge $e$.
The assignment 
$\Gamma$ also determines an evaluation of the capped edge,
$$\mathsf{E}_{GW}(e,\Gamma) = \mathsf{E}_{GW}(\lambda(h^e_1),
 \lambda(h^e_2), t_1,t_2,t_3,t'_1, t'_2   , u)$$
where the weights are determined by Figure 1.
A Gromov-Witten gluing factor is specified by $\Gamma$ at each
half-edge $h^v_i\in \mathcal{H}$ by 
$$\mathsf{G}_{GW}(h^v_i,\Gamma) = \mathfrak{z}(\lambda(h^v_i)) 
\left(\frac{\prod_{j=1}^3 t^v_j}{t^v_i}\right)^{\ell(\lambda(h^v_i))} 
u^{2\ell(\lambda(h^v_i))}.$$

The Gromov-Witten capped localization formula can be written
exactly in the form presented in Section \ref{bgty},
\begin{multline*}
\bZ'_{GW}\Big(X,u\ |\ \gamma_1, \ldots, \gamma_r
\Big)_\beta =\\
\sum_{\Gamma\in \Gamma_\beta}\
\prod_{v\in \mathcal{V}}\ \prod_{e\in \mathcal{E}}\ \prod_{h\in \mathcal{H}}
\mathsf{I}_\Gamma\
\mathsf{C}_{GW}(v,\Gamma) \
\mathsf{E}_{{GW}}(e,\Gamma)\
\mathsf{G}_{GW}(h, \Gamma)
\end{multline*}
where the product is over the sets of vertices $\mathcal{V}$, edges
 $\mathcal{E}$, and half-edges $\mathcal{H}$ 
of the polytope $\Delta$. 

The Donaldson-Thomas capped localization formula has an
identical structure,
\begin{multline*}
\bZ'_{DT}\Big(X,q\ |\ \gamma_1, \ldots, \gamma_r
\Big)_\beta =\\
\sum_{\Gamma\in \Gamma_\beta}\
\prod_{v\in \mathcal{V}}\ \prod_{e\in \mathcal{E}}\ \prod_{h\in \mathcal{H}}
\mathsf{I}_\Gamma\
\mathsf{C}_{DT}(v,\Gamma) \
\mathsf{E}_{{DT}}(e,\Gamma)\
\mathsf{G}_{DT}(h, \Gamma)
\end{multline*}
where the evaluations $\mathsf{C}_{DT}(v,\Gamma)$  and 
$\mathsf{E}_{{DT}}(e,\Gamma)$ are defined as before. 
 The Donaldson-Thomas gluing factors
$$\mathsf{G}_{DT}(h^v_i,\Gamma) = 
(-1)^{|h^v_i|-\ell(\lambda(h^v_i))}
\mathfrak{z}(\lambda(h^v_i)) 
\left(\frac{\prod_{j=1}^3 t^v_j}{t^v_i}\right)^{\ell(\lambda(h^v_i))} 
q^{-|h^v_i|}$$
are slightly different.

The most basic example of capped localization occurs
for the 3-fold total space of
\begin{equation}\label{bb23}
\cO(a) \oplus \cO(b) \rightarrow \Pp.
\end{equation}
The standard localization formula has vertices over
$0,\infty\in \Pp$ and a single 
edge.
To write the answer in terms of capped localization,
we consider a $T$-equivariant degeneration of  \eqref{bb23} to a chain
$$ (0,0) \cup (a,b) \cup (0,0)$$
of total spaces of bundles over $\Pp$
denoted here by splitting degrees.
The first $(0,0)$-geometry is relative over $\infty\in \Pp$,
the central $(a,b)$-geometry is relative on both sides, and
the last $(0,0)$-geometry is relative over $0\in \Pp$.
The degeneration formula exactly expresses the Gromov-Witten and 
Donaldson-Thomas theories of \eqref{bb23} as capped
localization with 2 capped vertices and a single capped
edge in the middle. 

In fact, the capped localization formula for arbitrary
toric $X$ in both theories
can be proven by studying the example \eqref{bb23} ---
the cancelling of the rubber caps already occurs there.
We leave the details
to the reader.

\section{Proofs of Propositions 1 and 2}\label{s_proof}

\subsection{Tube and cap invariants}

\subsubsection{Partition functions} \label{parfun}
Consider the 3-fold 
$
\bA_n \times \Pp$ with the full $T$-action.
The torus factors
$$T= (\com^*)^2 \times \com^*$$
into a vertical subtorus acting on $\bA_n$ and a
horizontal factor acting on $\Pp$.
There is a $T$-equivariant projection
$$
\pi:\bA_n\times \Pp \to \Pp \,.
$$
Let  $0,\infty\in\Pp$ be the $T$-fixed points.

We define 
partition functions in both Gromov-Witten
and Donaldson-Thomas theories with relative conditions
over the divisors
$$F_0, F_\infty \subset \bA_n\times\Pp$$
lying over $0,\infty \in \Pp$.
The geometry here is called the $\bA_n$-{\em tube}.
We set
\begin{multline}
\mathsf{GW}^*\left( \gamma_1,\dots,\gamma_r \right)_{\lambda,\mu} = \\
(-iu)^{\ell(\lambda)+\ell(\mu)}
\sum_{\sigma\in H_2(\bA_n,\mathbb{Z})} s^{\sigma} \,
\bZ'_{GW}\left( \bA_n\times \Pp
,u\, \big|\gamma_1,\dots,\gamma_r \big|
\lambda,\mu \right)_{d[\Pp]+\sigma} \,,
\label{bZhat}
\end{multline}
where $\lambda$ and $\mu$ represent relative conditions ---
 partitions of the horizontal degree $d$ labeled by elements
of $H^*_T(\bA_n,\mathbb{Z})$. 
The symbols $s^{\sigma}$ span the group ring
of $H_2(\bA_n,\Z)$. For brevity, we have dropped the
relative divisor $F_0\cup F_\infty$ in the notation.
Similarly, let
\begin{multline*}
\mathsf{DT}^*\left(\gamma_1,\dots,\gamma_r  \right)_{\lambda,\mu} = \\
(-q)^{d}
\sum_{\sigma\in H_2(\bA_n,\mathbb{Z})} s^{\sigma} \,
\bZ'_{DT}\left( \bA_n\times \Pp,q\, \big|\gamma_1,\dots,\gamma_r \big|
\lambda,\mu \right)_{d[\Pp]+\sigma} \,,
\end{multline*}

A benefit of the above notation is a
simple form for the conjectural GW/DT correspondence:
$$
\mathsf{GW}^*\left( \gamma_1,\dots,\gamma_r 
 \right)_{\lambda,\mu} =
\mathsf{DT}^*\left(\gamma_1,\dots,\gamma_r
 \right)_{\lambda,\mu}$$
after the variable change $e^{-iu}= -q$.

\subsubsection{Tube calculation}\label{tubcal}
Another advantage of the partition functions defined in
Section \ref{parfun} is their uniform behavior under
degeneration.

Let $g_{\lambda,\mu}$ be the natural $T$-equivariant
residue pairing
\begin{equation}
\label{innerp}
g_{\lambda,\mu} = \int_{\text{Hilb}(\bA_n,d)} \lambda \cup \mu
\end{equation}
where $|\lambda|=|\mu|= d$.
For example, for $S=\C^2$, we have
$$
g_{\lambda,\mu} =
 (-1)^{|\lambda|-\ell(\lambda)} 
 \frac{\delta_{\lambda,\mu}}{\zz(\lambda)} (t_1
 t_2)^{-\ell(\lambda)}\,.
$$
Define raised partition functions by
$$
\mathsf{GW}^*\left( \gamma_1,\dots,\gamma_r 
 \right)^{\lambda'}_{\mu} = \sum_\lambda
\mathsf{GW}^*\left( \gamma_1,\dots,\gamma_r 
 \right)_{\lambda,\mu} (-1)^d g^{\lambda,\lambda'},$$
$$\mathsf{DT}^*\left( \gamma_1,\dots,\gamma_r 
 \right)^{\lambda'}_{\mu} = \sum_\lambda
\mathsf{DT}^*\left( \gamma_1,\dots,\gamma_r 
 \right)_{\lambda,\mu} (-1)^d g^{\lambda,\lambda'}, \ \ \
$$
where $g^{\lambda,\mu}$ is the inverse matrix of the
intersection pairing.

Consider the $T$-equivariant degeneration of $\Pp$ into a union
$$\Pp \rightarrow \Pp \cup \Pp.$$
The degeneration formulas  for the geometry with {\em no insertions}
{\footnote{By convention, ${\mathsf{GW}^*}^{\lambda}_{\mu}= 
{\mathsf{GW}^*}(\ )^{\lambda}_{\mu}$ and similarly for
Donaldson-Thomas theory.}}
are
$${\mathsf{GW}^*}^{\lambda}_{\mu} = 
\sum_{\mu'}{\mathsf{GW}^*}^{\lambda}_{\mu'}
{\mathsf{GW}^*}^{\mu'}_{\mu}.$$
$${\mathsf{DT}^*}^{\lambda}_{\mu} = 
\sum_{\mu'}{\mathsf{DT}^*}^{\lambda}_{\mu'}
{\mathsf{DT}^*}^{\mu'}_{\mu}.$$
The $u=0$ and $s=0$ specialization (well-defined
for the starred normalization) on the Gromov-Witten
side shows the matrix
${\mathsf{GW}^*}^{\lambda}_{\mu}$
is invertible and hence equal to the identity.
The same conclusion is obtained on the Donaldson-Thomas
side via the specialization $q=0$ and $s=0$.
We conclude
$${\mathsf{GW}^*}^{\lambda}_{\mu} = {\mathsf{DT}^*}^{\lambda}_{\mu} 
= \delta^\lambda_\mu.$$

\subsubsection{Cap calculation}\label{scap}

The $\bA_n$-{\em cap} geometry is obtained by imposing relative
conditions on $\bA_n \times \Pp$ only along $F_\infty$ {\em with
no insertions}.
Define 
\begin{eqnarray*}
\mathsf{GW}^*_{\lambda} & = &
(-iu)^{d+\ell(\lambda)}
\sum_{\sigma\in H_2(\bA_n,\mathbb{Z})} s^{\sigma} \,
\bZ'_{GW}\left( \bA_n\times \Pp,u\, \big|\  \big|
\lambda \right)_{d[\Pp]+\sigma} \ , \\
\mathsf{DT}^*_{\lambda} & = &
(-q)^{d}
\sum_{\sigma\in H_2(\bA_n,\mathbb{Z})} s^{\sigma} \,
\bZ'_{DT}\left( \bA_n\times \Pp,q\, \big|\ \big|
\lambda \right)_{d[\Pp]+\sigma} \,.
\end{eqnarray*}
The formulas for raising indices are the same in the
tube case.

Consider the cap geometry for $S \times \Pp$ where
$S$ is a compact surface and
$K_S\cdot \sigma \le 0$ for all curve classes $\sigma\in H_2(S,\mathbb{Z})$
of interest. 
Most cap invariants
vanish for dimension reasons. 
Indeed, if $\beta=d[\Pp]+\sigma$, the
virtual dimension
$$
\bd =2d - \sigma \cdot K_S
$$
is bounded below by
the dimension $2d$ of $\Hilb(S,d)$, which is also the
maximal codimension of relative conditions in Gromov-Witten theory.
We must have $\sigma \cdot K_S = 0$ 
and $\lambda \in  
H^{0}(\Hilb(S,d),\mathbb{Q})$ for
the cap invariants ${\mathsf{GW}^*_S}^\lambda$
and ${\mathsf{DT}^*_S}^\lambda$ with {\em no insertions}
 to have chance of
not
vanishing.

Let $\bA_n \subset S$ be a $T$-equivariant embedding in
a compact toric surface $S$.
On the Gromov-Witten side, the moduli of maps to 
$\bA_n \times \Pp$ occurs as an open and closed
subset of the moduli of maps to $S\times \Pp$.
On the Donaldson-Thomas side,
the dimension 0 components of the subschemes can wander off
of $\bA_n \times \Pp$, but a localization argument
shows the reduced theory{\footnote{The division
by the degree 0 Donaldson-Thomas theory
removes contributions of the complement of
$\bA_n$ in $S$.}} of $\bA_n\times \Pp$ is determined by an
open and closed  locus of the moduli of ideal sheaves
of $S\times \Pp$.

By considering the inclusion $\bA_n\subset S$, we draw
two conclusions. First, the cap invariants
$${\mathsf{GW}^*}^\lambda = {\mathsf{DT}^*}^\lambda = 0$$
vanish unless $\lambda$ is the unique partition $1^d$ with
each part weighted by the identity in $H^0(\bA_n,\mathbb{Z})$.
Let us denote the latter partition by $\mathbf{1}^d$.
Second, the cap invariants
\begin{equation}\label{nby}
{\mathsf{GW}^*}^{\mathbf{1}^d}, \ \ {\mathsf{DT}^*}^{\mathbf{1}^d}
\end{equation}
are {\em non-equivariant scalars}.
Hence, the $T$-equivariant integrals \eqref{nby} can be calculated via 
a restricted torus. For the vertical subtorus $(\com^*)^2$,
the calculations of \cite{dmt,mo2} yield
\begin{equation*}
{\mathsf{GW}^*_S}^{\mathbf{1}^d} = {\mathsf{DT}^*_S}^{\mathbf{1}^d}=1.
\end{equation*}

\subsection{Differential equations}\label{sDE}

\subsubsection{Equation I}


Let $t_3$ be the weight of the $T$-action on
the trivial normal bundle of
$$F_0 \subset \bA_n \times
\Pp.$$
By localization, every class $\gamma\in H^*_T(\bA_n\times \Pp,\mathbb{Z})$ 
satisfies
\begin{equation}\label{bhhr}
t_3 \, \gamma  = \gamma_0 - \gamma_\infty \ \in H^*_T(\bA_n\times \Pp,
\mathbb{Z})
\end{equation}
where 
$\gamma_0,\gamma_\infty$ are the
restrictions of $\gamma$ to $F_0$ and $F_\infty$
respectively.

We may express the insertion of the class $\gamma_0$
in terms of rubber integrals via the following gluing formula.
For the Gromov-Witten theory of $\bA_n \times \Pp$,
\begin{equation}
\lang \lambda \big| \gamma_0 \big| \mu \rang'_\beta
= \sum_{\nu,\,\sigma_1+\sigma_2 = \sigma}
\lang \lambda \big| \gamma_0 \big| \nu \rang^\sim_{\beta_1}{}' \,
\zz(\nu) \, u^{2\ell(\nu)} \,
\lang \nu^\vee \big|  \ \big| \mu \rang'_{\beta_2}
\label{gamma0}
\end{equation}
where
$$
\beta_i =  d [\Pp]+\sigma_i \,, \quad \sigma_i \in H_2(\bA_n,\mathbb{Z})\,,
$$
is the decomposition of the curve class into an 
$\bA_n$-component and
a $\Pp$-component. The sum in \eqref{gamma0}
is over all intermediate relative conditions
$\nu$ and all splittings of the $\bA_n$-component 
$\sigma \in H_2(\bA_n,\mathbb{Z})$ of the degree $\beta$.
The bracket on the left denotes the
theory of $\bA_n \times \Pp$ relative to $F_0 \cup F_\infty$ with
relative conditions  $\lambda, \mu$ and insertion $\gamma_0$.
The first set of brackets on the right with a tilde superscript
denotes  the {rubber} theory{\footnote{See \cite{mo2,GWDT}
for a foundational discussion of rubber theory in the
$\bA_n$ context.}}
 of $\bA_n \times \Pp$ with the insertion
$\gamma_0$ pulled back from $\bA_n$. The second bracket on the
right is the $\bA_n$-tube.

The proof of \eqref{gamma0} 
is obtained from basic geometry
of the relative theory. 
Let $\mathcal{D}$ be the $T$-equivariant Artin stack of degenerations
of the 1-pointed relative geometry  $\bA_n\times \Pp/
F_0 \cup F_\infty$.
The moduli space $\mathcal{D}$ parameterizes 
accordian destabilizations $Y$
of $\bA_n \times \Pp$ together with a point $p\in Y$
not on the boundary or the singular locus.
There is a $T$-equivariant  evaluation map
$$\text{ev}_p: \mathcal{D} \rightarrow \bA_n \times \Pp.$$
There are two divisors on $\mathcal{D}$ related to $0\in \Pp$.
The first is $\text{ev}_p^{*}([F_0])$. The second is the
boundary divisor
$\mathcal{D}_0\subset \mathcal{D}$
where $p$ is on a destabilization over $0\in \Pp$.
The following result can be easily seen by comparing the divisors
on smooth charts for $\mathcal{D}$.

\begin{Lemma}\label{nfq}
 $ \text{\em ev}_p^*([F_0]) = \mathcal{D}_0$ in
$\text{\em Pic}(\mathcal{D}).$
\end{Lemma}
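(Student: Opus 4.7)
The plan is to check the divisorial equality on smooth étale charts of $\mathcal{D}$. Set-theoretic equality is essentially immediate: the stabilization $\text{stab}\colon \mathcal{Y} \to \bA_n \times \Pp$ contracts each destabilization bubble over $0$ into $F_0$, so $\text{ev}_p = \text{stab} \circ \sigma_p$ lands in $F_0$ exactly when $p$ lies on a bubble over $0$ --- the alternative, $p \in F_0$ of the main component, is ruled out since $p$ avoids the boundary. The real content is thus matching Cartier multiplicities.

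I would work near a generic point of $\mathcal{D}_0$, i.e., a pair $(Y, p)$ where $Y$ has a single bubble over $0$. The universal expansion locally agrees with the blow-up of $\bA_n \times \Pp \times \operatorname{Spec} k[[s]]$ along $F_0 \times \{s = 0\}$, where $s$ is the smoothing parameter, and the stabilization is the composition of the blow-down with projection to $\bA_n \times \Pp$. Pick a local coordinate $x$ on $\Pp$ near $0$ so that $F_0 = \{x = 0\}$. In the blow-up chart with coordinates $(x, u)$ subject to $s = xu$, the main component is $\{u = 0\}$, the exceptional bubble is $\{x = 0\}$, and the node is the origin. This chart identifies $\mathcal{D}$ locally with the complement of the node, and $\text{ev}_p$ is literally projection to the $x$-coordinate. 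Therefore $\text{ev}_p^*([F_0]) = \{x = 0\}$, which coincides exactly with the bubble locus $\mathcal{D}_0$ in this chart.

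The complementary chart $(s, t)$ with $x = st$ covers the bubble and the generic fiber. Here $\text{ev}_p^*([F_0]) = \{st = 0\}$ a priori decomposes as the bubble $\{s = 0\}$ plus the component $\{t = 0\}$; but $\{t = 0\}$ corresponds to $p$ lying on $F_0$ of the smooth fiber (for $s \neq 0$) or at the node (for $s = 0$), both excised from $\mathcal{D}$. So only $\{s = 0\} = \mathcal{D}_0$ survives, and the equality in $\text{Pic}(\mathcal{D})$ holds on this chart as well. The main potential obstacle is the iterated case of several bubbles over $0$, but the same local blow-up description applies inductively at each level of the accordion: each bubble contributes its own exceptional divisor, and the only component of $\text{ev}_p^*([F_0])$ not discarded as boundary or node data is the one recording that $p$ is on that particular bubble. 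Summing these contributions over the stratification of $\mathcal{D}_0$ yields the global identity.
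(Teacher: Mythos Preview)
Your approach is correct and is precisely what the paper suggests (but does not spell out): comparing the two divisors on smooth charts for $\mathcal{D}$ via the standard blow-up model of the universal expansion. One minor slip: in your second chart the point $(s,t)=(0,0)$ is the new relative divisor $F_0$ sitting in the bubble rather than the node (the node lies at $t=\infty$, outside this chart), but since both loci are excised from $\mathcal{D}$ your conclusion that only $\{s=0\}=\mathcal{D}_0$ survives is unaffected.
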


Equation \eqref{gamma0} is proven by
factoring 
$$\gamma_0= \gamma \cdot [F_0],$$ pulling-back 
Lemma \ref{nfq} to the moduli space of stable maps
 to
 $\bA_n\times \Pp/
F_0 \cup F_\infty$, and using the splitting formulas
\cite{L}.

The parallel result in Donaldson-Thomas theory holds with
DT gluing factors,
\begin{equation}
\lang \lambda \big| \gamma_0 \big| \mu \rang'_\beta
= \sum_{\nu,\,\sigma_1+\sigma_2 = \sigma}
\lang \lambda \big| \gamma_0 \big| \nu \rang^\sim_{\beta_1}{}' \,
(-1)^{|\nu|-\ell(\nu)} \,
\zz(\nu) \, q^{-|\nu|} \,
\lang \nu^\vee \big|  \ \big| \mu \rang'_{\beta_2}
\label{gamma00}
\end{equation}
 The proof is identical. The
universal relative space over the moduli space of
relative ideal sheaves discussed in \cite{GWDT} is used.

\subsubsection{Equation II}
\label{twtw}
Let $\gamma\in H^*_T(\bA_n\times\Pp,\mathbb{Z})$ 
be the pull-back of
a divisor $\Gamma\subset \bA_n$. By the divisor equation, the insertion of
$\gamma$ simply multiplies an invariant in class $d[\Pp]+\sigma$
by
$$
\gamma\cdot \beta = \Gamma \cdot \sigma \,.
$$
The insertion of $\gamma$ may thus be interpreted as the action of
a linear differential operator $\partial_\Gamma$ on the
generating function \eqref{bZhat} over all possible
vertical curve classes $\sigma$.

The formulas \eqref{gamma0}-\eqref{gamma00}
may be interpreted as left multiplication by the matrix
corresponding to
\begin{equation}
  \label{gamma_pt}
  \lang \lambda \big| \gamma_0 \big| \nu \rang^\sim_{\beta}{}' =
\lang \lambda \big|   \Gamma_F \big| \nu \rang'_{\beta}{} \,.
\end{equation}
Here $\Gamma_F = \gamma \cdot F$, where
$$
F \in H^2_{(\com^*)^2}(\bA_n \times \Pp)\,,
$$
is the fiber class --- the pull-back
of the generator of the nonequivariant
group $H^2(\Pp,\Z)$. 
The nonequivariance
here is because equation \eqref{gamma_pt} arises
from rigidification of the rubber.
The brackets on the
right side of \eqref{gamma_pt} take values in
the fiberwise $(\com^*)^2$-equivariant
cohomology.{\footnote{There is no possibility for confusion
in equation \eqref{gamma_pt} because even if
$\lang \lambda \big|   \Gamma_F \big| \nu \rang'_{\beta}{}$
is interpreted as a $T$-equivariant
bracket, the result is independent of
the $t_3$ along the $\Pp$ direction. The proof is
left to the reader.}}

We can write the formulas \eqref{gamma0}-\eqref{gamma00}
as
$${\mathsf{GW}^*}(\gamma_0)^\lambda_\mu
=\sum_\nu {\mathsf{GW}^*}(\Gamma_F)^\lambda_\nu
\ {\mathsf{GW}^*}^\nu_\mu\ ,$$
$${\mathsf{DT}^*}(\gamma_0)^\lambda_\mu
=\sum_\nu {\mathsf{DT}^*}(\Gamma_F)^\lambda_\nu
\ {\mathsf{DT}^*}^\nu_\mu \ .$$
By \eqref{bhhr}, we obtain the following linear
differential equations in the Gromov-Witten theory of $\bA_n \times \Pp$,
\begin{equation}
  \label{divisor}
  t_3 \, \partial_\Gamma  {\bO_{GW}}  = \big
[{\bO_{GW}}(\Gamma_F), {\bO_{GW}}],  
\end{equation}
Here, we consider the invariants ${\mathsf{GW}^*}_\bullet^\bullet$
as defining an operator $\bO_{GW}$ on the Fock space
associated to $\bA_n$,
\begin{equation} \label{xxy}
\bO_{GW} |\mu \rangle = \sum_\lambda {\mathsf{GW}^*}^\lambda_\mu |
\lambda\rangle,
\end{equation}
or equivalently
$$\langle\lambda| \bO_{GW} | \mu \rangle = (-1)^d
{\mathsf{GW}^*}_{\lambda,\mu}$$
where the inner product on Fock space
defined by \eqref{innerp} occurs on the left side.
The operator $\bO_{GW}(\Gamma_F)$ is defined similarly.
The identical equation 
\begin{equation}
  \label{divisordt}
  t_3 \, \partial_\Gamma  {\bO_{DT}}  = \big
[{\bO_{DT}}(\Gamma_F), {\bO_{DT}}],  
\end{equation}
holds
in Donaldson-Thomas theory.

Unfortunately, equations \eqref{divisor} and \eqref{divisordt}
 are trivial here.
We have already seen
$\bO_{GW}$ is the identity matrix.
In particular, only $\sigma=0$ terms occur. Hence, the
left sides of \eqref{divisor} and \eqref{divisordt} vanish. The right sides
of \eqref{divisor} and \eqref{divisordt}
 are also 0 since commutators with
the identity matrix always vanish.
However,
we will make nontrivial use of the differential 
equations 
in more complicated geometries in the remainder of the
paper.

The calculations of 
$\bO_{GW}(\Gamma_F)$ and 
$\bO_{DT}(\Gamma_F)$ are 
certainly not formal. 
The equality
we need is Corollary 8.5 of \cite{mo2}.

\begin{Theorem}\label{mmoo} 
For $\bA_n\times \Pp$,
$${\mathsf{GW}^*}(\Gamma_F)_\lambda^\mu = 
{\mathsf{DT}^*}(\Gamma_F)_\lambda^\mu$$
after the variable change $e^{iu}=-q$,
\end{Theorem}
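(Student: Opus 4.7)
The plan is to identify both operators $\bO_{GW}(\Gamma_F)$ and $\bO_{DT}(\Gamma_F)$ with a common third object: the operator of quantum multiplication by the divisor class $\Gamma$ on the Hilbert scheme $\Hilb(\bA_n)$, viewed as an endomorphism of the Fock space $\cF = \bigoplus_d H^*(\Hilb(\bA_n,d),\mathbb{Q})$. This is exactly the apex of the triangle of equivalences described in Section~\ref{angeo}, and for \emph{divisor} operators each edge of the triangle has been established in \cite{dmt, mo1, mo2}. Our task is to organize the two sides of the stated equality so that they both land on the same Hilbert scheme operator.

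First, I would recast both sides as matrix elements on $\cF$ in the Nakajima basis, exactly in the form \eqref{xxy}. The rubber structure is essential here: the identity \eqref{gamma_pt} rigidifies the rubber insertion $\gamma_0$ into a fiber-class insertion $\Gamma_F$ on the non-rubber relative target, with the resulting operator coefficient in curve class $d[\Pp]+\sigma$ carrying exactly the weight $s^{\sigma}$. On the GW side the weighting by $u$ and on the DT side the weighting by $q$ play the role of the additional Kähler (loop) parameters of the Hilbert scheme theory, linked by $e^{iu}=-q$.

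Next, one identifies each side with the divisor operator on $\Hilb(\bA_n)$. For the Gromov-Witten side, this is the content of the Hilbert scheme/rubber correspondence proved in \cite{mo2}: the $T$-equivariant rubber divisor operator on $\bA_n\times\Pp$ agrees with $T$-equivariant quantum multiplication by the Nakajima divisor associated to $\Gamma\in H^2(\bA_n)$. For the Donaldson-Thomas side, the analogous identification is carried out in \cite{dmt, mo1}, building on Nakajima's Heisenberg presentation of $H^*(\Hilb(\bA_n))$ and the Bridgeland-King-Reid derived equivalence, together with a careful matching of the Kähler variables after $e^{iu}=-q$. The divisor case is precisely the portion of the triangle that has been proven unconditionally, which is why it suffices for our purposes.

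The main obstacle is not in the present theorem but in the two geometric identifications just described; once they are in hand, the theorem follows by transitivity, as both sides of the desired equality become the same endomorphism of $\cF$. The hardest part of the supporting work is the explicit computation of the two-point rubber operator with a divisor insertion and its matching with the Nakajima-basis expansion of quantum multiplication on $\Hilb(\bA_n)$; this is the content of Corollary~8.5 of \cite{mo2}, which we simply invoke.
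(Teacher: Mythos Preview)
Your proposal is essentially what the paper does: Theorem~\ref{mmoo} is not proved from scratch in this paper but is cited as Corollary~8.5 of \cite{mo2}, and your final paragraph lands on exactly this citation. The surrounding explanation via the triangle of Section~\ref{angeo} is accurate in spirit and matches the paper's own framing there.

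One small correction on attributions: you have the references for the two legs of the triangle swapped. The paper \cite{mo2} is the \emph{Donaldson--Thomas} theory of $\bA_n\times\Pp$, while \cite{dmt} is the \emph{Gromov--Witten} theory of $\bA_n$-resolutions; \cite{mo1} computes the quantum cohomology of $\Hilb(\bA_n)$ (the apex). So the GW/Hilb identification uses \cite{dmt,mo1} and the DT/Hilb identification uses \cite{mo1,mo2}, not the other way around. This does not affect the logic, since you correctly invoke Corollary~8.5 of \cite{mo2} as the statement actually needed.
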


Theorem \ref{mmoo} is a special case of
the equivariant relative GW/DT correspondence for 
toric varieties.{\footnote{More of the correspondence
is proven in \cite{dmt,mo1,mo2}, but Theorem \ref{mmoo} is
all we will require.}}
The result will play a crucial
role for us.

\subsection{Proof of Proposition 1}
\label{capcor}

\subsubsection{Fiberwise $(\com^*)^2$-action}

Since capped edges are reduced partition functions in the
Gromov-Witten and Donaldson-Thomas
 theories of local curves, the results of \cite{BP,OP6,GWDT}
establish the GW/DT correspondence for the fiberwise $(\C^*)^2$-equivariant
cohomology.
Our goal now is
to strengthen the correspondence to include the full
$3$-dimensional $T$-action.

\subsubsection{$(0,0)$ and $(0,-1)$-edges}

The $(0,0)$-edge is the theory of 
$\com^2 \times \Pp$ 
relative to fibers over $0,\infty \in \Pp$
with respect to the full 3-dimensional $T$-action.
The geometry here is just the $\bA_0$-tube, so we have already
proven the required GW/DT correspondence in Section \ref{tubcal}.

By standard $T$-equivariant degeneration arguments,
the correspondence for all $(a,b)$-edges follows from the
$(0,0)$,$(0,-1)$, and $(-1,0)$ cases.
By symmetry, we need only consider the $(0,-1)$-edge,
the total space of 
$$\cO \oplus \cO(-1) \rightarrow \Pp$$
with respect to the full $T$-action.

The starred
partition functions for the $(0,-1)$-edge are
\begin{eqnarray*}
\bE_{GW}^*\left(0,-1\right)_{\lambda,\mu} &= & 
(-iu)^{\ell(\lambda)+\ell(\mu)-d}\
\bZ'_{GW}\left((0,-1),u\ | \ \lambda,\mu \right), \\
\bE_{DT}^*\left(0,-1\right)_{\lambda,\mu} &= & 
(-q)^{-\frac{d}{2}}\ 
\bZ'_{DT}\left((0,-1),q\ | \ \lambda,\mu\right).
\end{eqnarray*}
The GW/DT correspondence we require then takes the form
$$\bE_{GW}^*\left(0,-1\right)_{\lambda,\mu}
=
\bE_{DT}^*\left(0,-1\right)_{\lambda,\mu}$$
after the variable change $e^{-iu}=-q$.

\subsubsection{DT  descendents}
Primary insertions in Donaldson-Thomas theory were
defined in \eqref{c2oper} via the
the K\"unneth components of $c_2(\fI)$. Of course,
we may also consider the other characteristic classes of the
universal ideal sheaf $\fI$.

Let $X$ be a nonsingular 3-fold.
Following the terminology of \eqref{c2oper},
we define the
{\em descendent} insertion $\sigma_k(\gamma)$ by
the operation
\begin{equation*}
\sigma_{k}(\gamma)\big(\xi\big)=
 \pi_{1*}\big(\ch_{k+2}(\fI)
\cdot \pi_2^*(\gamma) \cap      \pi_1^*(\xi)\big)\,.
\end{equation*}
The insertion $\sigma_k(\gamma)$
 lowers the (real) homological degree by $2k+\deg\gamma - 2$.
In particular, $\sigma_1(1)$ preserves the degree. By Riemann-Roch,
$$
\sigma_1(1) = - \chi+ \frac{1}{2} \int_\beta c_1(X)\,,
$$
where $\chi=\chi(\cO/\cI)$ and $\beta$ is the curve class.

Specializing now to the $(0,-1)$-edge geometry, we see
\begin{equation}\label{gthy}
\bZ_{DT}((0,-1),q \ | \ \sigma_1(1)\ |\ 
\lambda,\mu) = (-q \frac{d}{dq} + \frac{d}{2})\ 
\bZ_{DT}((0,-1),q\ |\ \lambda,\mu)
\end{equation}
where the partition functions are {\em unprimed}.
The degree 0 series \cite{GWDT} is
$$\bZ_{DT}((0,-1),q\ |\ \emptyset,\emptyset)=
M(-q)^{-\frac{t_1+t_2}{t_3}} \cdot M(-q)^{\frac{t_1'+t'_2}{t_3}}
$$
where the weights $t_i$ are specified by Figure 1 and
$$M(-q) = \prod_{n=1}^\infty \frac{1}{(1-(-q)^n)^n}$$
is the McMahon function \eqref{macf}

After transforming \eqref{gthy}, we obtain the following
equation
\begin{multline} \label{kqqq}
-q \frac{d}{dq} \ \bE_{DT}^*(0,-1)^{\lambda}_{\mu} =\\
\bE_{DT}^*(0,-1 \ |\ \sigma_1(1))^{\lambda}_\mu -
\left(\frac{t_1+t_2}{t_3}-\frac{t'_1+t'_2}{t_3}\right) \Phi(q)
\bE_{DT}^*(0,-1)^{\lambda}_\mu
\end{multline}
where
$$\bE_{DT}^*(0,-1 \ |\ \sigma_1(1))_{\lambda,\mu} =
(-q)^{-\frac{d}{2}} \frac
{\bZ_{DT}((0,-1),q \ | \ \sigma_1(1)\ |\ \lambda,\mu)}
{\bZ_{DT}((0,-1),q\ |\ \emptyset,\emptyset)}
$$
and
$$\Phi(q)= q \frac{d}{dq} M(-q).$$
Next, we use the relation
$$t_3 \sigma_1(1)= \sigma_1(F_0) - \sigma_1(F_\infty)$$
and the differential equation of Section \ref{sDE} to conclude
\begin{eqnarray*}
t_3\bE_{DT}^*(0,-1 \ |\ \sigma_1(1))^{\lambda}_{\mu}
&=& \ \ \ 
\sum_\nu \bE_{DT,t_1,t_2}^*(0,0 \ | \ \sigma_1(F))^{\lambda}_\nu
\ \bE_{DT}^*(0,-1)^{\nu}_{\mu}\\
& & -
\sum_\nu \bE_{DT}^*(0,-1)^{\lambda}_{\nu} \
\bE_{DT,t_1',t_2'}^*(0,0 \ | \ \sigma_1(F))^{\nu}_\mu\ .
\end{eqnarray*}
where
$$\bE_{DT,w_1,w_2}^*(0,0 \ | \ \sigma_1(F))_{\lambda,\mu} =
(-q)^{-d} \frac
{\bZ_{DT}((0,0),q \ | \ \sigma_1(F)\ |\ \lambda,\mu)}
{\bZ_{DT}((0,0),q\ |\ \emptyset,\emptyset)}$$
and the subscripted weights specify the fiberwise $T$-action.

Written as operators on the Fock space of $\bA_0$, we
obtain
\begin{eqnarray*}
-t_3 q \frac{d}{dq} \bO_{DT} (0,-1)& = & \ \  \  
\bO^{t_1,t_2}_{DT} ((0,0)\ | \ \sigma_1(F) )\ \bO_{DT} (0,-1) \\
& & -\bO_{DT} (0,-1)\ \bO_{DT}^{t_1',t_2'} ((0,0)\ | \ \sigma_1(F) ) \\
& & - \left({t_1+t_2}-{t'_1-t'_2}\right) \Phi(q) 
\bO_{DT} (0,-1).
\end{eqnarray*}
By Proposition 22 of \cite{GWDT},
$$\bO^{w_1,w_2}_{DT} ((0,0)\ | \ \sigma_1(F) )=
 - \mathsf{M}(w_1,w_2) + (w_1+w_2) \Phi(q) \cdot \text{Id},$$
where $\mathsf{M}$ is the fundamental operator{\footnote{We
follow here the terminology of \cite{GWDT}
for the raising and lowering operators $\alpha_{\pm r}$
on Fock space.}} on
Fock space defined by
\begin{multline*}
\mathsf{M}(w_1,w_2) = (w_1+w_2) \sum_{k>0} \frac{k}{2} \frac{(-q)^k+1}{(-q)^k-1} \,
 \alpha_{-k} \, \alpha_k  + \\
\frac12 \sum_{k,l>0} 
\Big[w_1 w_2 \, \alpha_{k+l} \, \alpha_{-k} \, \alpha_{-l} -
 \alpha_{-k-l}\,  \alpha_{k} \, \alpha_{l} \Big] \,.
\end{multline*}
We conclude
\begin{equation} \label{mmmq}
-t_3 q \frac{d}{dq} \bO_{DT} (0,-1) =
-\mathsf{M}(t_1,t_2) \ \bO_{DT} (0,-1)
+\bO_{DT} (0,-1) \mathsf{M}(t_1',t_2')
\end{equation}
holds.

\subsubsection{GW descendents}
The dilaton equation for the 
descendent insertion $\tau_1(1)$ 
yields
$$\left(u\frac{d}{du}+d\right) \ \bE_{GW}^*(0,-1)^\lambda_\mu =
\bE_{GW}^*((0,-1)\ | \tau_1(1))^\lambda_\mu$$
where
$$\bE_{GW}^*(0,-1)_{\lambda,\mu} = (-iu)^{\ell(\lambda)+\ell(\mu)-d}
\mathsf{Z}_{GW}'(0,-1)_{\lambda,\mu},$$
$$\bE_{GW}^*((0,-1)\ | \ \tau_1(1))_{\lambda,\mu} = 
(-iu)^{\ell(\lambda)+\ell(\mu)-d}
\mathsf{Z}_{GW}'((0,-1)\ | \ \tau_1(1))_{\lambda,\mu}.$$
Next, we use the relation
$$t_3 \tau_1(1)= \tau_1(F_0) - \tau_1(F_\infty)$$
and the differential equation of Section \ref{sDE} to conclude
\begin{eqnarray*}
t_3\bE_{GW}^*(0,-1 \ |\ \tau_1(1))^{\lambda}_{\mu}
&=& \ \ \ 
\sum_\nu \bE_{GW,t_1,t_2}^*(0,0 \ | \ \tau_1(F))^{\lambda}_\nu
\ \bE_{GW}^*(0,-1)^{\nu}_{\mu}\\
& & -
\sum_\nu \bE_{GW}^*(0,-1)^{\lambda}_{\nu} \
\bE_{GW,t_1',t_2'}^*(0,0 \ | \ \tau_1(F))^{\nu}_\mu\ ,
\end{eqnarray*}
where
\begin{equation}\label{v89}
\bE_{GW,w_1,w_2}^*(0,0 \ | \ \tau_1(F))_{\lambda,\mu} =
(-iu)^{\ell(\lambda)+\ell(\mu)} 
{\bZ_{GW}'((0,0),q \ | \ \tau_1(F)\ |\ \lambda,\mu)}
\end{equation}
and the subscripted weights specify the fiberwise $T$-action.

Putting the results together, we obtain the main equation,
\begin{eqnarray*} 
t_3
\left(u\frac{d}{du}+d\right) \ \bE_{GW}^*(0,-1)^\lambda_\mu
&=& \ \ \
\sum_\nu \bE_{GW,t_1,t_2}^*(0,0 \ | \ \tau_1(F))^{\lambda}_\nu
\ \bE_{GW}^*(0,-1)^{\nu}_{\mu}\\
& & -
\sum_\nu \bE_{GW}^*(0,-1)^{\lambda}_{\nu} \
\bE_{GW,t_1',t_2'}^*(0,0 \ | \ \tau_1(F))^{\nu}_\mu\ .
\end{eqnarray*}
The change of variables $e^{iu}=-q$ implies
$$-t_3 q \frac{d}{dq} = -\frac{1}{iu} \left(t_3 u\frac{d}{du}\right).$$
By a straightforward{\footnote{The evaluation
of the fiberwise $(\com^*)^2$-equivariant Gromov-Witten integral
\begin{equation}\label{tttyyq}
\langle \lambda \ | -\tau_1(F) \ | \ \mu \rangle^{(0,0)},
\end{equation}
determining \eqref{v89} proceeds in several well-known steps.
First, the series \eqref{tttyyq} is related to 
\begin{equation}\label{gglle}
\langle \lambda \ | -(2,1^{d-2}) \ | \ \mu \rangle^{(0,0)}
\end{equation}
by degeneration --- the parallel step in Donaldson-Thomas
theory is done in Section 9.1 of \cite{GWDT}.
The evaluation of \eqref{gglle} is a central result of \cite{BP}.
The difference between \eqref{tttyyq} and \eqref{gglle} is very easily
evaluated. The only new integral which must be computed
is $$\int_{[\overline{M}_{g,1}(\Pp,1)]^{vir}} \lambda_g\lambda_{g-1}
\tau_1(p),$$
where $p\in H^2 (\Pp,\mathbb{Z})$ is the point class.
By localization, the integral is immediately reduced
to the Hodge integral series
$$ \sum_{g\geq 1}2g \cdot  u^{2g}   \int_{\overline{M}_{g,1}}
\lambda_g \lambda_{g-1} \frac{ \sum_{i=0}^{g} (-1)^i \lambda_i}
{1-\psi_1}
 =u\frac{d}{du}\log \frac{u/2}{\sin(u/2)}$$ 
calculated in \cite{pan}. We leave the details to the
reader.}}
evaluation of \eqref{v89},
we find $-\frac{1}{iu}$ times  the main equation can be written
in $q$ as
\begin{equation} \label{mmmqq}
-t_3 q \frac{d}{dq} \bO_{GW} (0,-1) =
-\mathsf{M}(t_1,t_2) \ \bO_{GW} (0,-1)
+\bO_{GW} (0,-1) \mathsf{M}(t_1',t_2') \ .
\end{equation}
The term on the left 
$$-\frac{1}{iu} \cdot t_3 d \bO_{GW}(0,-1)$$
is exactly cancelled by the differences between \eqref{v89}
and the two instances of $\mathsf{M}$ on the right side.
Hence, we have an exact match with \eqref{mmmq}.

\subsubsection{Matching}
Let $\cV\subset \cF$ be the linear subspace of the Fock
space of $\bA_0$ consisting of vectors
$\left|v\rang$ for which the 
$(0,-1)$-edge matrix element
$$
\lang \lambda \big| \bO_{DT}(0,-1) \big|v\rang \in \Q(t_1,t_2,t_3)((q))
$$
is a rational function of $q$ satisfying the relative GW/DT
correspondence \eqref{relcor} for all $\lambda$.

We first prove $\cV$ is nonempty by showing 
$$
\left|1^d\rang \in \cV
$$
for all $d$. Indeed,
\begin{equation}\label{hyy7}
\lang \lambda \big| \bO_{DT}(0,-1) \big|1^d\rang =
t_1^{-\ell(\lambda)} \, \lang \lambda[L_{-1}] \big| \bO_{DT}(0,-1)
\big|1^d\rang'
\end{equation}
where the cohomology label $[L_{-1}]$ is the Poincar\'e dual of
the $\cO(-1)$-axis in the fiber over $0\in\Pp$.
The relative conditions imply the bracket on the right is an
integral of the correct dimension over a proper space
(modulo point contributions
 removed in the reduced invariant, see
Section 10.3 of \cite{GWDT}).
The bracket on the right is thus
{\em independent} of the equivariant parameters. 
The same conclusion holds in Gromov-Witten theory.

The GW/DT correspondence for \eqref{hyy7} is the same for
$T$-equivariant cohomology and  $(\com^*)^2$-equivariant
cohomology since the answer is weight independent.
Since the $(\com^*)^2$-equivariant statement has been proven,
$T$-equivariant statement also holds.

Using the identical equations \eqref{mmmq} and \eqref{mmmqq} for the 
two theories, we conclude 
$\cV\otimes \Q(q,t_1,t_2,t_3)$ is closed under the action of the operator
$$
\nabla = t_3 \,q \frac{d}{dq}
- \MM(t'_1,t'_2) \,.
$$
Let $p(d)$ denote the number of partitions of $d$. We claim
the vectors
$$
\nabla^k \, \left|1^d\rang\,, \quad k=0,\dots,p(d)-1
$$
are linearly independent over $\Q(q,t_1,t_2,t_3)$ and therefore span the
entire subspace $\cF_d \subset \cF$ of vectors of energy $d$.
In fact, the $t_3$-constant terms of these vectors
are already linearly independent, see \cite{BP,OP6}.
Hence,
$\cV$ is the entire Fock space $\cF$, and the proof
of Proposition 1 is complete.
\qed

\subsection{Capped rubber}\label{s_capr}

We start with
the tube,
$$
\pi: \bA_n \times \Pp \rightarrow \Pp
$$
relative to the fibers over $0,\infty\in \Pp$. 
In the moduli space
of stable maps, we define a
$T$-equivariant open subset
$$U^{CR}_{g,\beta} \subset \overline{M}'_{g}(\bA_n\times\Pp/F_0\cup F_\infty,
\beta)$$
consisting of map
with {\em no positive degree components}
in the destabilization of the fiber over $0\in\Pp$.
The open set
$$V^{CR}_{n,\beta} \subset {I}_{n}(\bA_n\times\Pp/F_0\cup F_\infty,
\beta)$$
is defined in exactly the same way.
The $T$-equivariant residue theories of $U^{CR}$ and $V^{CR}$
are well-defined since the $T$-fixed loci are compact.
The geometry here is called the {\em capped $\bA_n$-rubber}.{\footnote{
Another approach to capped rubber in Gromov-Witten and
Donaldson-Thomas theory is to start with $A_n$-rubber
\cite{dmt,mo2}
and then add $n$ degree 0 caps. The result is precisely
what is obtained by localization on $U^{CR}$ and $V^{CR}$.}}

We define 
partition functions with relative conditions
over the divisors
$F_0, F_\infty$
lying over $0,\infty \in \Pp$.
We set{\footnote{The symbols $U^{CR}$ and $V^{CR}$ here indicates not the targets but rather the
open sets of the moduli spaces associated to $\bA_n\times \Pp$.}}
\begin{eqnarray}
\mathsf{GW}^{CR*}_{\lambda,\mu} & = & 
(-iu)^{\ell(\lambda)+\ell(\mu)}
\sum_{\sigma\in H_2(\bA_n,\mathbb{Z})} s^{\sigma} \,
\bZ'_{GW}\left( U^{CR}
,u\, \big|\ \big|
\lambda,\mu \right)_{d[\Pp]+\sigma} \,, \nonumber\\\label{vee3}
\mathsf{DT}^{CR*}_{\lambda,\mu} & =& 
(-q)^{d}
\sum_{\sigma\in H_2(\bA_n,\mathbb{Z})} s^{\sigma} \,
\bZ'_{DT}\left( V^{CR},q\, \big|\ \big|
\lambda,\mu \right)_{d[\Pp]+\sigma} \,.
\end{eqnarray}
following the conventions of Section \ref{parfun}.
The GW/DT correspondence takes the form
$$
\mathsf{GW}^{CR*}_{\lambda,\mu} =
\mathsf{DT}^{CR*}_{\lambda,\mu}$$
after the variable change $e^{-iu}= -q$.

\begin{Lemma}\label{Pcapr}
The capped $\bA_n$-rubber invariants are
 rational functions of $q$ and
satisfy the  $GW/DT$ correspondence.
\end{Lemma}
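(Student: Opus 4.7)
View the capped $\bA_n$-rubber partition functions from \eqref{vee3} as operators $\bO^{CR}_{GW},\bO^{CR}_{DT}$ on the Fock space $\cF$ of $\bA_n$, in the convention of \eqref{xxy}. The strategy parallels the proof of Proposition 1 in Section \ref{capcor}: match the two operators on a distinguished family of states, then derive a matching linear differential equation in the $t_3$ direction, and span.

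\emph{Base cases.} After restriction to the vertical $(\com^*)^2$-subtorus, the capped rubber reduces, up to explicit degree-$0$ McMahon-type factors from the caps, to the $\bA_n$-rubber theory, for which rationality and the GW/DT correspondence are established in \cite{dmt, mo1, mo2}. Moreover, exactly as in \eqref{hyy7}, the matrix elements $\lang \lambda \,\big|\, \bO^{CR}\, \big|\, \mathbf{1}^d \rang$ with trivial cohomology weights on the right factor as an equivariant weight times an integral over a proper $T$-fixed locus whose value is independent of $t_3$. Hence they are rational in $q$ and already agree in the two theories by the $(\com^*)^2$-equivariant statement.

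\emph{Differential equation.} For a divisor $\Gamma\subset \bA_n$, let $\gamma = \pi^*[\Gamma]$ and apply \eqref{bhhr}: $t_3 \gamma = \gamma_0 - \gamma_\infty$. On the rubber side of $U^{CR}$ and $V^{CR}$, where accordion destabilization at $F_\infty$ is allowed, the rigidification gluing of Section \ref{sDE} (formulas \eqref{gamma0}, \eqref{gamma00}, \eqref{gamma_pt}) applies to the $\gamma_\infty$-insertion and factors it as a right composition of $\bO^{CR}$ with the fundamental tube divisor operator $\bO(\Gamma_F)$ of Theorem \ref{mmoo}. On the cap side, where no accordion is allowed at $F_0$, the $\gamma_0$-insertion reduces via Section \ref{scap} to a cap-level divisor operator $A(\Gamma)$ acting from the left. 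This yields an operator ODE
$$t_3\,\partial_\Gamma\,\bO^{CR} \;=\; A(\Gamma)\circ \bO^{CR} \;-\; \bO^{CR}\circ \bO(\Gamma_F),$$
of identical combinatorial form in both theories. By Theorem \ref{mmoo} the driving operator $\bO(\Gamma_F)$ matches and is rational in $q$, and the same holds for $A(\Gamma)$ by combining Section \ref{scap} with Theorem \ref{mmoo}.

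\emph{Solving.} Matching initial conditions on the states $|\mathbf{1}^d\rang$, a matching linear ODE in $t_3$, and rational, matched driving operators together force $\bO^{CR}_{GW}=\bO^{CR}_{DT}$ after $e^{-iu}=-q$ and rationality of both, provided successive applications of the ODE operator to $|\mathbf{1}^d\rang$ span each energy subspace $\cF_d$ over $\Q(q,t_1,t_2,t_3)$. This is the same $t_3$-constant-term linear-independence argument used at the end of Section \ref{capcor} and transfers verbatim to the $\bA_n$ setting. The principal technical obstacle is the precise derivation of the ODE, in particular the correct identification of the cap-side operator $A(\Gamma)$ on the restricted moduli $U^{CR}$, $V^{CR}$, and verifying that the standard rubber rigidification still applies there when destabilization at $F_0$ is suppressed.
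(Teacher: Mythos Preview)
Your overall shape is right --- view the capped rubber as an operator on Fock space and derive a divisor-insertion ODE --- but two steps do not go through as written.

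\emph{Identification of $A(\Gamma)$.} The operator you flag as the ``principal technical obstacle'' is simply $\bO(\Gamma_F)_0$, the $\sigma=0$ part of the tube divisor operator. On the moduli spaces $U^{CR}$, $V^{CR}$ no positive-degree bubble is permitted over $F_0$, so the rigidification argument of Section~\ref{sDE} (Lemma~\ref{nfq}) applied at $F_0$ produces exactly the restriction $\sigma_1=0$ in \eqref{gamma0}--\eqref{gamma00}. This is not a cap-side computation from Section~\ref{scap}; it is the same rubber factorization as before, truncated to vertical degree zero on the left. The resulting ODE is
\[
t_3\,\partial_\Gamma\,\bO(\CR)\;=\;\bO(\Gamma_F)_0\cdot\bO(\CR)\;-\;\bO(\CR)\cdot\bO(\Gamma_F)\,,
\]
identical in both theories by Theorem~\ref{mmoo}.

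\emph{The solving step is the genuine gap.} Your plan to import the spanning argument from the end of Section~\ref{capcor} does not transfer. There, $\nabla$ involves $q\,\tfrac{d}{dq}$ and the operator $\MM$ on the $\bA_0$ Fock space; the $t_3$-constant terms of $\nabla^k|1^d\rangle$ are $(-\MM)^k|1^d\rangle$, whose linear independence is a specific fact about $\MM$ established in \cite{BP,OP6}. Here the differential is $\partial_\Gamma$ in the $s$-variables, and the $t_3$-constant terms of successive applications are powers of $\bO(\Gamma_F)$ acting on $|\mathbf 1^d\rangle$. The claim that these span each $\cF_d$ is precisely the nondegeneracy of the divisor operators on $\Hilb(\bA_n)$, which in this paper is only \emph{conjectural} (see the footnote in Section~\ref{angeo}). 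Moreover $\cF_d$ for $\bA_n$ has dimension far exceeding $p(d)$, so even the counting does not match. Your base case via $|\mathbf 1^d\rangle$ and a $t_3$-independence claim as in \eqref{hyy7} is also not justified: the presence of vertical curve classes means there is no obvious properness forcing independence of $t_3$.

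The paper's route avoids all of this. The natural initial condition is not a family of Fock states but the horizontal piece $\bO(\CR)_0$, which equals the tube, hence the identity (Section~\ref{tubcal}). One then extracts the $\sigma$-component of the ODE,
\[
t_3(\Gamma\cdot\sigma)\,\bO(\CR)_\sigma\;=\;\big[\bO(\Gamma_F)_0,\,\bO(\CR)_\sigma\big]\;+\;(\text{lower }\sigma'\text{ terms})\,,
\]
and proceeds by induction on the effective cone. For $\sigma\neq 0$ one may choose $\Gamma$ with $\Gamma\cdot\sigma\neq 0$; since $\bO(\Gamma_F)_0$ is $t_3$-independent, so are the eigenvalues of its adjoint action, and the linear map $X\mapsto t_3(\Gamma\cdot\sigma)X-[\bO(\Gamma_F)_0,X]$ is invertible over $\Q(q,t_1,t_2,t_3)$. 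This yields both rationality and the GW/DT match without any spanning or nondegeneracy hypothesis.
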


\begin{proof}
The capped rubber invariants \eqref{vee3} determine operators
$$\bO_{GW}(\CR),\ \bO_{DT}(\CR): \mathcal{F} \rightarrow \mathcal{F}$$
acting on the 
the Fock space associated to $\bA_n$ following Section \ref{twtw}.

Equations \eqref{gamma0}-\eqref{gamma00}
 remain valid in the residue theory with the
restriction $\sigma_1=0$ by the definition of
$U_{CR}$ and $V_{CR}$.
 As a result, we obtain 
\begin{equation}
  \label{ndivCR}
  t_3 \, \partial_\Gamma \bO_{GW}(\CR) = 
\bO_{GW}(\Gamma_F)_0 \cdot \bO_{GW}(\CR) - \bO_{GW}(\CR)
\cdot  \bO_{GW}(\Gamma_F) \,,
\end{equation}
where $\bO_{GW}(\Gamma_F)_0$ denotes the
contribution of curves with degree 0 in the $\bA_n$-direction.
An identical equation holds in Donaldson-Thomas theory.

The horizontal part of the capped rubber operator is
simply equal to the tube,
$$\bO_{GW}(\CR)_0= \text{Id}, \ \ \bO_{DT}(\CR)_0= \text{Id}.$$
We will use \eqref{ndivCR} to uniquely reconstruct 
the capped rubber operators from their horizontal parts.

Let  $\bO_{GW}(\CR)_\sigma$ denote the contribution
of curves with
vertical degree $\sigma$. Equation \eqref{ndivCR}
may be written as
\begin{equation}
  \label{divisor2}
  t_3 \, (\Gamma\cdot\sigma) \, \bO_{GW}(\CR)_\sigma =
\big[\bO_{GW}(\Gamma_F)_0,  \bO_{GW}(\CR)_\sigma\big]  + \dots \,,
\end{equation}
where the dots stand for terms involving $\bO_{GW}(\CR)_{\sigma'}$
with $\sigma-\sigma'$ nonzero and effective. The latter terms may be
assumed known by induction.

As long as $\sigma\ne 0$, the left side of \eqref{divisor2}
can be made nonzero by a suitable choice of $\Gamma$.
Since the operator $\bO_{GW}(\Gamma_F)_0$ does not depend on
$t_3$, neither do the eigenvalues of the
commutation action of $\bO_{GW}(\Gamma_F)_0$.
Hence,
the linear equation \eqref{divisor2} has a unique
solution for $\bO_{GW}(\CR)_\sigma$ in the
the field of rational functions of $q$ and $t_i$,

An identical discussion is valid in Donaldson-Thomas theory.
In fact, since
$$\bO_{GW}(\Gamma_F)=\bO_{DT}(\Gamma_F)$$
by Theorem 2,
 the reconstruction is
the same in the two theories.
The GW/DT correspondence is therefore proven. 
\end{proof}

\subsection{Correspondence for 2-leg vertices}\label{stwoleg}
\label{pr1}

We first prove Proposition 2 for the capped vertices of
the form $\bC^*(\lambda,\nu,\emptyset)$. 
Since the last
partition is trivial, such vertices are said to
have {\em 2-legs}. Our constructions will be parallel
for Gromov-Witten and Donalson-Thomas theory, so we
omit the subscript in the capped vertex notation.

\begin{Lemma}\label{twoleg}
2-leg capped vertices satisfy the GW/DT correspondence.
\end{Lemma}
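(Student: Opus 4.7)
My plan is to apply capped localization to the capped $\bA_1$-rubber invariants of $\bA_1\times\Pp$ and use the already-established GW/DT correspondences for capped edges (Proposition \ref{Ted}), the $\bA_1$-tube (Section \ref{tubcal}), and the capped $\bA_1$-rubber (Lemma \ref{Pcapr}) to isolate the 2-leg capped vertex contributions.

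The key structural input is that $\bA_1$ has exactly two $T$-fixed points joined by a single compact edge, so the toric polytope of $\bA_1\times\Pp$ has four vertices, each incident to exactly two compact edges (one in the $\bA_1$-base direction, one in the $\Pp$-direction) and one noncompact edge (the $\bA_1$-fiber). Consequently, in any capped localization formula for an invariant of $\bA_1\times\Pp$, every capped vertex contribution is automatically a 2-leg capped vertex of the form $\bC^*(\lambda,\mu,\emptyset)$.

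Concretely, I would write out the capped localization formula for $\mathsf{GW}^{CR*}_{\lambda,\mu}$ and $\mathsf{DT}^{CR*}_{\lambda,\mu}$: each is a sum over balanced partition markings $\Gamma$ of products of four 2-leg capped vertex factors, two $(-2,0)$-type capped $\bA_1$-edges, two $(0,0)$-type capped $\Pp$-edges, and gluing factors. By Lemma \ref{Pcapr}, the left-hand sides agree after the change of variables $e^{-iu}=-q$; by Proposition \ref{Ted}, the capped edge factors agree; and the gluing factors match by construction. Subtracting the two sides yields a vanishing identity in the differences of the (suitably normalized) 2-leg capped vertices. Inducting on the partition sizes $|\lambda|$, $|\mu|$ and on the vertical curve class $\sigma\in H_2(\bA_1,\Z)=\Z$, one arranges markings in which only a single "new" 2-leg capped vertex appears, with the remaining three fixed by the inductive hypothesis or by trivial/low-degree special cases (empty adjacent partitions, $\sigma=0$, etc.).

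The main obstacle is this final unmixing step: guaranteeing that by varying $\Gamma$ and $\sigma$ we obtain enough independent equations to pin down each 2-leg capped vertex individually, rather than only certain linear combinations. This requires a nondegeneracy property of the capped $(-2,0)$-edge operators acting on the $\bA_1$-Fock space, in the spirit of the spanning argument at the end of Section \ref{capcor}. I expect to supply it by iterating divisorial capped-edge operators on a seed vector such as $|\mathbf{1}^d\rang$, whose matrix elements are weight-independent and thus directly accessible via the fiberwise $(\com^*)^2$-equivariant theory; the nondegeneracy of the resulting family of operators is guaranteed by the triangle of equivalences of Section \ref{angeo}, as established for divisorial operators in \cite{dmt,mo1,mo2}.
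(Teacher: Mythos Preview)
Your overall strategy---use an $\bA_1$ geometry and capped localization to isolate 2-leg capped vertices---is the right one, and it is essentially what the paper does. However, two key steps in your proposal are not right.

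\textbf{The geometric setup.} You propose to apply capped localization to the capped $\bA_1$-rubber $\mathsf{GW}^{CR*}$, $\mathsf{DT}^{CR*}$, and claim this yields four 2-leg capped vertices and four capped edges. That decomposition is the one for the \emph{absolute} theory of $\bA_1\times\Pp$, not for the capped rubber. The capped rubber is itself a building block in relative capped localization (it replaces the capped vertex at a relative divisor); it is not something you further decompose into capped vertices. The paper instead applies relative capped localization to the $\bA_1$-\emph{cap} (relative to $F_\infty$ only): over $0\in\Pp$ one obtains two 2-leg capped vertices $\bC^*(\lambda,\nu,\emptyset)$ and $\bC^*(\nu',\mu,\emptyset)$ joined by a single capped $(0,-2)$-edge, while over $\infty\in\Pp$ one obtains the capped $\bA_1$-rubber. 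Since the cap satisfies GW/DT (Section~\ref{scap}) and the capped rubber operator equals $\text{Id}+o(s^E)$ and is therefore invertible, one strips it off and is left with the product of two 2-leg vertices and one edge satisfying the correspondence for every choice of $(\lambda,\mu)$ and every value of $|\nu|=|\nu'|$.

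\textbf{The nondegeneracy.} This is the more serious gap. After the reduction above, one must solve for $\bC^*(\lambda,\nu,\emptyset)$ from bilinear combinations $\sum_{\nu,\nu'} \bC^*(\lambda,\nu,\emptyset)\,\bE^*(\nu,\nu')\,\bC^*(\nu',\mu,\emptyset)$. The paper inducts on $n=\min(|\lambda|,|\nu|)$: taking $|\lambda|\ge|\nu|>|\mu|$, the factors $\bC^*(\nu',\mu,\emptyset)$ are known, and one gets a linear system for the unknowns $\bC^*(\lambda,\nu,\emptyset)$. The solvability of this system is \emph{not} a property of edge operators; it is the statement that the rectangular matrix $[\bC^*(\nu',\mu,\emptyset)]_{|\nu'|=n,\,|\mu|<n}$ has maximal rank. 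This is Lemma~\ref{fundlem}, and its proof is the heart of the argument: one specializes to the Calabi--Yau torus $t_1+t_2+t_3=0$, where the uncapped DT vertex is computable, and reduces to a rank statement about skew Schur functions $s_{\lambda/\eta}(q^\rho)$. Your proposed substitute---iterating divisorial capped-edge operators on $|\mathbf{1}^d\rangle$ as in the end of Section~\ref{capcor}---addresses a different question (spanning Fock space to pin down an already-square \emph{edge} operator) and gives no handle on the rank of the matrix whose entries are the unknown \emph{vertices} themselves.
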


\begin{proof}
The cap geometery
$$
\pi:\bA_1 \times \Pp \rightarrow \Pp
$$
with relative conditions along $F_\infty$
has already been proven to satisfy the GW/DT correspondence
in Section \ref{scap}.

On the other hand,  the $\bA_1$-cap may be computed by relative capped
localization. The capped vertices and
edges over $0\in \Pp$ occur exactly as explained in Section 2.
Over $\infty \in \Pp$, a single vertex occurs 
given by capped $\bA_1$-rubber. Again, relative capped
localization is easily seen to be equivalent to usual
relative localization. In the starred normalization,
the relative capped localization
formulas for Gromov-Witten and Donaldson-Thomas theory
are exactly parallel --- the former  is obtained
from the latter by replacing all occurances of  $DT$ 
by $GW$.

\psset{unit=0.3 cm}
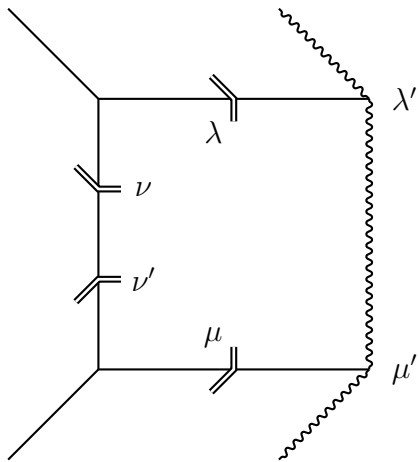
\begin{figure}[!htbp]
  \centering
   \begin{pspicture}(0,0)(19,20)
   \psline(4,4)(16,4)
   \psline(4,16)(16,16)
   \psline(0,0)(4,4)(4,16)(0,20)
   \pszigzag[coilarm=0.1,coilwidth=0.5,linearc=0.1](12,20)(16,16)
   \pszigzag[coilarm=0.1,coilwidth=0.5,linearc=0.1](16,4)(16,16)
   \pszigzag[coilarm=0.1,coilwidth=0.5,linearc=0.1](12,0)(16,4)
   \psline[doubleline=true](10,5)(10,4)(9,3)
   \psline[doubleline=true](10,15)(10,16)(9,17)
   \psline[doubleline=true](3,7)(4,8)(5,8)
   \psline[doubleline=true](3,13)(4,12)(5,12)
   \rput[tr](9.5,15){$\lambda$}
   \rput[l](17,16){$\lambda'$}
   \rput[l](17,4){$\mu'$}
   \rput[br](9.5,5){$\mu$}
   \rput[c](6,8){$\nu'$}
   \rput[c](6,12){$\nu$}
   \end{pspicture}
 \caption{Capped localization for the $\bA_1$-cap}
  \label{A1cap}
\end{figure}

A schematic
representation of the localization procedure
 is depicted in Figure \ref{A1cap}.
The lines in Figure \ref{A1cap} represent the edges of
the toric polyhedron of $X$. The squiggly lines belong to the
relative divisor. Partitions $\lambda'$ and $\mu'$ represent relative
conditions in the fixed point basis of $H^*_T(\bA_1,\mathbb{Z})$.
Other partitions represent intermediate relative conditions to be
 summed over.

Horizontal edges in  the $\Pp$-direction have a trivial normal
bundle. Hence, the corresponding edge operators are
identity operators --- which is why there is only one intermediate
partition on those edges. The other compact edge carries
two partitions satisfying
 $$|\nu|=|\nu'|$$
 connected by an capped $(0,-2)$-edge.
Figure \ref{A1cap} also 
represents two 2-legged capped vertices
$\bC^*(\lambda,\nu,\emptyset)$ and $\bC^*(\nu',\mu,\emptyset)$,
capped $\bA_1$-rubber 
connecting $(\lambda,\mu)$ to $(\lambda',\mu')$,
and 4 gluing factors indicated by double lines.

By Lemma \ref{Pcapr},  the 
capped $\bA_1$-rubber invariants  are rational functions of $q$
and satisfy the GW/DT correspondence. Since
$$
\bO_{GW}(\CR) =\bO_{DT}(\CR)= \text{Id} + o\left(s^E\right)\,,
$$
where $E$ is the exceptional divisor, the operator $\CR$ is
invertible.

We conclude the combination of
two 2-legged capped vertices and one capped edge
illustrated in Figure \ref{A1half} satisfies the
GW/DT correspondence for any value of $|\nu|=|\nu'|$\,.
\psset{unit=0.3 cm}
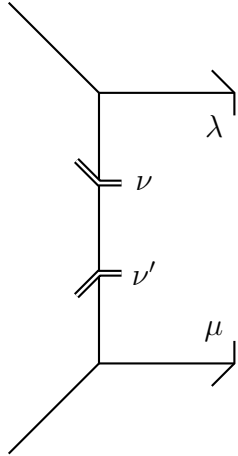
\begin{figure}[!htbp]
  \centering
   \begin{pspicture}(0,0)(10,20)
   \psline(4,4)(10,4)
   \psline(4,16)(10,16)
   \psline(0,0)(4,4)(4,16)(0,20)
   \psline(10,5)(10,4)(9,3)
   \psline(10,15)(10,16)(9,17)
   \psline[doubleline=true](3,7)(4,8)(5,8)
   \psline[doubleline=true](3,13)(4,12)(5,12)
   \rput[tr](9.5,15){$\lambda$}
   \rput[br](9.5,5){$\mu$}
   \rput[c](6,8){$\nu'$}
   \rput[c](6,12){$\nu$}
   \end{pspicture}
 \caption{$\bA_1$-cap minus the capped rubber}
  \label{A1half}
\end{figure}

We will now prove the GW/DT correspondence for 2-legged
capped
vertices $\bC^*(\lambda,\nu,\emptyset)$ by induction
on
$$
n=\min(|\lambda|,|\nu|) \,.
$$
The base case $n=0$ of the induction is provided
by the correspondence for 1-legged
vertices. The 1-legged vertex  is simply the $\bA_0$-cap
geometry treated in Section \ref{scap}.

We assume the partitions in Figure \ref{A1half} satisfy
$$
|\lambda| \ge |\nu| = |\nu'| > |\mu| \,.
$$
Then all capped vertices $\bC^*(\nu',\mu,\emptyset)$ are
known by induction. Figure \ref{A1half} may be
interpreted as a system of linear equations for
the unknown vertices $\bC^*(\lambda,\nu,\emptyset)$
in terms of the known vertices $\bC^*(\nu',\mu,\emptyset)$.
The linear system has more equations than unknowns.
Indeed, for fixed $|\lambda|$, the number of unknowns
equals $p(n)$, the number of partitions
of $n$, while the number of equations equals the number
of possibilities for $\mu$, 
$$
p(n-1)+\dots+p(1)+p(0) \ge p(n) \,, \quad n\ge 1 \,.
$$
Since the $(a,b)$-edge operator is invertible{\footnote{The
invertibility of the $(a,b)$-edge operator is easily
proven by consider the degeneration of a $(0,0)$-edge to
an $(a,b)$-edge and a $(-a,-b)$-edge.}},
the unique
solubility of the linear system is guaranteed by the
Lemma \ref{fundlem} below.

\end{proof}

\begin{Lemma}\label{fundlem}
The matrix of capped vertices
$$
\Big[\bC^*(\lambda,\mu,\emptyset)\Big],\quad |\lambda|=n\,,
|\mu|<n\,,
$$
has maximal rank.
\end{Lemma}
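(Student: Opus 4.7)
The plan is to establish maximal rank by specializing the equivariant parameters and identifying the resulting matrix with one whose non-degeneracy follows from Fock space representation theory.

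First I would work on the Donaldson--Thomas side, where the entries $\bC^*_{DT}(\lambda,\mu,\emptyset)$ are rational functions of $q$ and the tangent weights $t_1,t_2,t_3$. Maximal rank over the function field is equivalent to maximal rank at any single specialization, so we may freely choose convenient values. The natural move is to extract the leading term as $q\to 0$ after normalizing by the McMahon degree-zero factor. In this limit, contributions come only from ideal sheaves supported essentially on the two active legs; such sheaves are classified by the monomial ideals \eqref{mon_ideal} along each leg, and the resulting leading matrix admits an explicit combinatorial description.

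Next, the leading matrix should be identified with matrix elements of Nakajima operators acting on Fock space. Schematically, the $\lambda$-leg contributes the raising operators $\alpha_{-\lambda_i}$ applied to the vacuum, producing the Nakajima basis vector $|\lambda\rangle$ for $\mathcal{F}_n$; the $\mu$-leg furnishes the pairing with $|\mu\rangle \in \mathcal{F}_{|\mu|}$ via a lowering operation that decreases the degree from $n$ to $|\mu|$. Since the Nakajima basis of $\mathcal{F}_n$ is non-degenerate and since the combined lowering data for $|\mu|<n$ spans enough of the dual space to separate all of $\mathcal{F}_n$, the leading matrix has rank $p(n)$.

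The main difficulty lies in the second step: isolating the precise leading behavior of the equivariant DT vertex and identifying it with an explicit Fock space expression requires a careful local computation. The topological vertex formalism \cite{AKMV} should provide the necessary evaluations, with equivariance recovered via rationality in the parameters. An alternative, possibly cleaner route is induction on $n$: assuming the rank statement for smaller $n$ and exploiting the invertibility of the capped edges (Proposition \ref{Ted}) together with the capped rubber operators (Lemma \ref{Pcapr}) through $\bA_1$-cap localization as in the proof of Lemma \ref{twoleg}, one should be able to propagate maximal rank forward to level $n$.
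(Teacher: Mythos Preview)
Your primary route has the right shape --- specialize and identify with an explicit combinatorial object --- but the paper's execution differs in two essential respects, and your alternative route is circular.

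On the alternative route first: the $\bA_1$-cap localization in Lemma~\ref{twoleg} is a linear system whose coefficient matrix is precisely $\big[\bC^*(\nu',\mu,\emptyset)\big]$ with $|\nu'|=n$, $|\mu|<n$. Lemma~\ref{fundlem} is exactly the statement that this coefficient matrix has maximal rank, which is what guarantees unique solubility of that system. So the $\bA_1$-cap relation consumes Lemma~\ref{fundlem} rather than producing it; you cannot bootstrap the rank statement from the same equations that require it.

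On the primary route: the paper does not take a $q\to 0$ limit. Instead it specializes the tangent weights to the Calabi--Yau locus $t_1+t_2+t_3=0$, where the capped vertex is related to the bare (uncapped) DT vertex by invertible capped rubber factors. The bare vertex in this specialization is the topological vertex of \cite{AKMV,ORV}, and up to further invertible factors the matrix becomes
\[
\sum_\eta s_{\lambda^t/\eta}(q^\rho)\, s_{\mu/\eta}(q^\rho),
\]
a product of skew Schur function matrices. Triangularity of $[s_{\mu/\eta}]$ reduces the problem to showing $[s_{\lambda/\eta}(q^\rho)]$, $|\lambda|=n$, $|\eta|<n$, has rank $p(n)$. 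This is then a pure symmetric-function argument: via $s_\eta^\perp s_\lambda = s_{\lambda/\eta}$ and $p_k^\perp = k\,\partial/\partial p_k$, it suffices to show that for any degree-$n$ symmetric function $g$ some iterated power-sum derivative is nonzero at $q^\rho$, which follows by picking out the leading lexicographic term. Your $q\to 0$ idea, by contrast, has to contend with the rubber caps still attached, and it is not clear the leading coefficient there is rich enough to separate all of $\cF_n$; you would need an independent argument at least as delicate as the Schur-function one.
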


\begin{proof}

It is enough to consider the special \emph{topological
vertex} case
$$
t_1 + t_2 + t_3 =0\,,
$$
in which case great simplifications occur. The
capped DT vertex is related to the standard uncapped
DT vertex by invertible capped rubbers.
The uncapped DT vertex
may be evaluated directly, see
\cite{mnop1,ORV}.
Up to further invertible factors, the matrix to consider
becomes
$$
\sum_{\eta} s_{\lambda^{t}/\eta}(q^\rho)\, s_{\mu/\eta}(q^{\rho})
$$
where $s_{\lambda/\eta}$ are skew Schur functions evaluated
at
$$
q^\rho = (q^{-1/2}, q^{-3/2}, \dots) \,.
$$

As $\mu$ and $\eta$ vary over all partitions of 
size at most $n-1$, the matrix of skew Schur functions
$s_{\mu/\eta}$ is invertible and upper-triangular.  
We are thus reduced to proving the matrix
$$
\Big[s_{\lambda/\eta}(q^\rho)\Big]\,,
\quad |\lambda|=n\,, |\eta| < n\,,
$$
has maximal rank.

For a symmetric function $f$ of degree $r$, 
let 
$$f^{\perp}: \Lambda_k \rightarrow \Lambda_{k-r}$$
 denote the linear map adjoint to multiplication by $f$ under 
the standard inner product.  By a basic property of skew Schur functions,
$$s_{\rho}^{\perp}s_{\lambda} = s_{\lambda/\rho}$$
and also
$$p_{k}^{\perp}g = k \frac{\partial}{\partial p_k}\, g,$$
where $p_k$ is the power-sum symmetric function $\sum x_{i}^{k}$ and the derivative
is obtained by expressing $g$ as a polynomial expression of the functions $p_k$.

Using these two facts, the full rank statement 
on skew Schur functions is equivalent to the following claim.  
For any symmetric function $g$ of degree $n$,
there exists a partition $\mu$, $|\mu|<n$,
for which
\begin{equation}\label{gq2}
\left.\frac{\partial}{\partial p_{\mu_{1}}}\cdots  \frac{\partial}{\partial p_{\mu_{l}}}\, g\right|_{q^\rho} \ne 0 \,.
\end{equation}
Indeed, we can arrange the partial derivative \eqref{gq2}
 to be a multiple
of a single $p_k$ by
focusing on the leading lexicographic term of $g$ with respect to the
ordering
$$
p_1 > p_2 > p_3 > \dots \,.
$$
The proof of Lemma \ref{fundlem} and thus Lemma \ref{twoleg} is complete.
\end{proof}

\subsection{Proof of Proposition 2}
\label{pr2}

We now prove the GW/DT  correspondence for 3-legged
capped vertices $\mathsf{C}^*(\lambda,\mu,\nu)$. The argument follows 
the proof for the
2-legged vertices.
For the 3-leg case, we use capped localization for the 
$\bA_2$-cap. 
A schematic view of the localization is
illustrated in Figure \ref{A2cap}.
\psset{unit=0.3 cm}
\begin{figure}[!htbp]
  \centering
   \begin{pspicture}(0,0)(19,20)
   \psline(4,4)(16,4)
   \psline(4,16)(16,16)
   \psline(5,10)(17,10)
   \psline(0,0)(4,4)(5,10)(4,16)(0,20)
   \pszigzag[coilarm=0.1,coilwidth=0.5,linearc=0.1](12,20)(16,16)
   \pszigzag[coilarm=0.1,coilwidth=0.5,linearc=0.1](16,4)(17,10)
   \pszigzag[coilarm=0.1,coilwidth=0.5,linearc=0.1](17,10)(16,16)
   \pszigzag[coilarm=0.1,coilwidth=0.5,linearc=0.1](12,0)(16,4)
  \rput[l](17,16){$\lambda'$}
   \rput[l](18,10){$\eta'$}
   \rput[l](17,4){$\mu'$}
   \end{pspicture}
 \caption{Capped localization for the $\bA_2$-cap}
  \label{A2cap}
\end{figure}
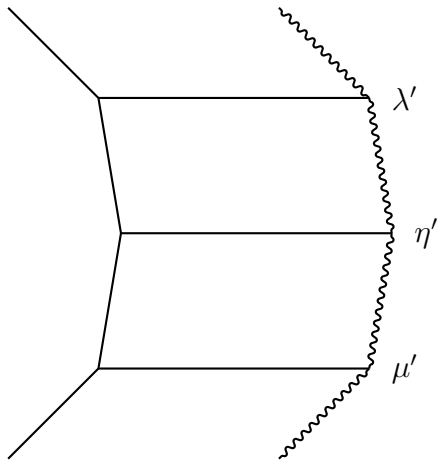

Partitions $\lambda'$, $\eta'$, and $\mu'$ represent 
arbitrary relative
conditions. Since the 3-leg vertex
occurs in Figure \ref{A2cap} once, the capped localization
may be viewed as a system of linear equations for the 3-leg
vertex. By Proposition \ref{fundlem}, the 3-leg
 vertex is uniquely determined.

In fact, the nondegeneracy of the 
square matrix
$$
\Big[\bC^*(\lambda,\mu,\emptyset)\Big],\quad |\lambda|,|\mu|\le n\,,
$$
is sufficient to conclude.
\qed

Since the GW/DT correspondence for toric varieties is
a consequence of Proposition 1 and 2, the proof of
Theorem 1 is complete.

\section{Properties of the vertex}\label{s_Proper}

\subsection{Connections}
\subsubsection{Topological vertex}
Let $X$ be a local Calabi-Yau toric 3-fold with a 
2-dimensional subtorus
$$T_{CY} \subset T$$
 preserving the
Calabi-Yau form.  A procedure for computing the
Gromov-Witten series via the topological vertex
was proposed
by Aganagic, Klemm, Mari\~no, and Vafa \cite{AKMV}. In \cite{mnop1,ORV},
the procedure of \cite{AKMV} was shown to 
exactly compute the Donaldson-Thomas series of $X$.
By the GW/DT correspondence of Theorem 1, the two theories
coincide.

The 1-leg version of the topological vertex
appeared first in
\cite{MV} and was proven in \cite{llz,OPMV}. The
2-leg case was proven in \cite{llz2}. 
Finally, an evaluation of the Gromov-Witten theory
of a local Calabi-Yau toric 3-fold $X$ in a form closely
resembling 
the topological vertex was obtained in \cite{lllz}.

\subsubsection{Stable pairs}
The virtual enumeration of stable pairs in the
derived category of $X$ was conjectured in \cite{PT1,PT2}
to be equivalent to both Gromov-Witten and Donaldson-Thomas
theory. 
After the triangle of equivalences of Section \ref{angeo}
is extended to include the stable pairs theory of
$\bA_n \times \Pp$,  our proof of
Theorem 1 will also apply to the stable pairs theory
of nonsingular toric 3-folds.

\subsection{Conjectures}
\subsubsection{Chow varieties}
Let $X$ be a nonsingular projective 3-fold with very ample
line bundle $L$.
The Chow variety of curves
parameterizes cycles of class $\beta\in H_2(X,\mathbb{Z})$ in $X$. 
More precisely, we define  $\Chow(X,\beta)$  to be  
the seminormalization of the subvariety of Chow forms associated to the
embedding $X \subset \mathbf{P}(H^0(X,L)^\vee)$.
The variety $\Chow(X,\beta)$ is independent of $L$.
See \cite{jk} for a detailed treatment.

For both the moduli space of stable maps
$\oM'_{g}(X,\beta)$ and the Hilbert scheme of curves
$I_n(X,\beta)$, the associated seminormalized varieties
admit maps to $\Chow(X,\beta)$
for all $g$ and $n$,
$$
\xymatrix{
\oM'_{g}(X,\beta)_{\mathrm{sn}} \ar[dr]^{\rho_{GW}} && I_n(X,\beta)_{\mathrm{sn}}
\ar[dl]_{\rho_{DT}}\\
  & \Chow(X,\beta)}\,.
$$
As a corollary of our proof of Theorem 1, we immediately
obtain the following result.

\begin{Corollary}\label{TChow} 
Let $X$ be a nonsingular projective toric 3-fold.
We have
\begin{multline}
  \sum_g u^{2g-2} \,
  {\rho_{GW}}_*\left(\left[\oM_{g,n}'(X,\beta)\right]^{\vir} \right) = \\
  \frac1{\bZ_{DT}(X,q)_0} \, \sum_n q^{n} \,
  {\rho_{DT}}_*\left(\left[I_n(X,\beta)\right]^{\vir} \right) \,
\end{multline}
in $H_*(\Chow(X,\beta),\mathbb{Q}) 
\otimes \mathbb{Q}(q)$, 
after the variable change $e^{iu}=-q$.
\end{Corollary}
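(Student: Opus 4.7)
The plan is to reduce this to Theorem~\ref{aaa} (the primary GW/DT correspondence) by pairing with tautological cohomology classes on $\Chow(X,\beta)$. Both pushforwards $\rho_{GW*}[\oM'_{g}(X,\beta)]^{\vir}$ and $\rho_{DT*}[I_n(X,\beta)]^{\vir}$ lie in $H_*(\Chow(X,\beta),\mathbb{Q})$, so equality can be tested by pairing against cohomology and using the projection formula. The content of the corollary is that the primary correspondence suffices to detect all relevant pairings.

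First, I would construct tautological classes on $\Chow(X,\beta)$ indexed by $\gamma_1,\dots,\gamma_r \in H^*(X,\mathbb{Q})$. The Chow variety carries a universal cycle $\mathcal{Z}\subset \Chow(X,\beta)\times X$, and the slant-product classes
$$\alpha(\gamma_1,\dots,\gamma_r)=\prod_{i=1}^r (\pi_1)_*\bigl(\pi_2^*\gamma_i\cap [\mathcal{Z}]\bigr)\in H^*(\Chow(X,\beta),\mathbb{Q})$$
pull back, by construction, to $\prod_i \text{ev}_i^*\gamma_i$ on $\oM'_{g,r}(X,\beta)$ and to $\prod_i c_2(\gamma_i)$ on $I_n(X,\beta)$, using exactly the same construction \eqref{c2oper} used to define primary DT insertions. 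Thus, by the projection formula, pairing either side of the desired identity with $\alpha(\gamma_1,\dots,\gamma_r)$ recovers the corresponding primary GW or DT series with insertions $\gamma_1,\dots,\gamma_r$. Theorem~\ref{aaa} then gives the equality of the two pairings after the variable change $e^{iu}=-q$ and normalization by $\mathsf{Z}_{DT}(X,q)_0$.

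The remaining step, which is the main obstacle, is showing that tautological classes detect the pushforwards in question (equivalently, that the span of $\alpha(\gamma_1,\dots,\gamma_r)$ is enough to distinguish the two homology classes). I would argue this by lifting to the $T$-equivariant setting, where both sides are $T$-equivariant classes on $\Chow(X,\beta)$ and localization applies. The $T$-fixed locus of $\Chow(X,\beta)$ is combinatorial: it parameterizes $T$-invariant cycles supported on the $1$-skeleton of the toric polytope $\Delta$, which is precisely the data of edge markings from Section~2. The restrictions of both pushforwards to each fixed locus are governed by the capped vertex and edge contributions, whose GW/DT match is the content of Propositions~\ref{Ted} and \ref{Tccor}. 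Since $T$-equivariant tautological classes $\alpha(\gamma_1,\dots,\gamma_r)$ (including the hyperplane class $\alpha(c_1(L))$ from the projective embedding $\Chow(X,\beta)\subset \mathbf{P}(H^0(X,L)^\vee)$) separate the fixed-locus contributions, the pairings determine the classes. Passing from $T$-equivariant to ordinary homology via the non-equivariant limit completes the proof.

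In summary, the corollary follows by combining the slant-product identification of tautological Chow classes with primary insertions, the projection formula, Theorem~\ref{aaa}, and an equivariant-localization detection argument on $\Chow(X,\beta)$; the last of these is what makes the implication a genuine corollary of the proof method rather than merely of the statement of Theorem~\ref{aaa}.
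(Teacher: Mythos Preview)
Your proposal contains the correct core idea but wraps it in an unnecessary detour. The paper's proof is essentially your second paragraph alone: work $T$-equivariantly, observe that both $\rho_{GW}$ and $\rho_{DT}$ are $T$-equivariant, and localize on $\Chow(X,\beta)$. The $T$-fixed points of the Chow variety are isolated and indexed exactly by edge degree data (markings of $\Delta$), and for each such marking the GW and DT pushforward contributions are matched directly by Propositions~\ref{Ted} and \ref{Tccor} via the capped localization formula. That is the whole argument.

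Your first paragraph --- constructing tautological slant-product classes on $\Chow$, identifying their pullbacks with primary insertions, and invoking Theorem~\ref{aaa} --- is logically superfluous. The ``main obstacle'' you identify (that tautological classes detect the pushforwards) cannot be resolved without equivariant localization, and once you localize you already have the direct match of fixed-point contributions, bypassing any need for Theorem~\ref{aaa} or the pairing argument. You implicitly acknowledge this in your final sentence: the corollary really is a consequence of the \emph{proof} (Propositions~\ref{Ted} and \ref{Tccor}) rather than of Theorem~\ref{aaa} itself. What your framing would buy, if the detection step were formal, is a cleaner conceptual reduction showing that the Chow refinement carries no more information than the primary invariants; but since the detection step is not formal and requires the same localization input, the paper's direct route is strictly more efficient.
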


\begin{proof}
Define the Chow variety with 
respect to a $T$-equivariant very ample bundle $L$ on $X$.
Then, both $\rho_{GW}$ and $\rho_{DT}$ are
also $T$-equivariant, and the respective push-forwards 
can be calculated by $T$-equivariant localization.
The Chow classes of $T$-fixed points 
in both Gromov-Witten and Donaldson-Thomas theory
are determined by the edge degrees. By the GW/DT correspondence
for capped vertices and edges, the contributions in both theories
are matched when the edge degrees are fixed.
\end{proof}

Since Chow and homology groups are preserved by seminormalization, 
the above homological push-forwards $\rho_{*}$ are well-defined.  
As the primary Gromov-Witten
 and Donaldson-Thomas
 invariants can be computed in terms of the push-forwards
$\rho_*$, Corollary \ref{TChow} may
be viewed as a refinement of Theorem \ref{aaa}.

\begin{Conjecture}
The equivalence of Corollary \ref{TChow} holds for all
nonsingular projective 3-folds $X$.
\end{Conjecture}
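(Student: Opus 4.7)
The plan is to lift the Chow-level equivalence from toric $3$-folds to arbitrary projective $3$-folds by degeneration, mirroring the strategy by which Theorem~\ref{aaa} was reduced to capped vertex and edge identities. The first step is to formulate and establish a relative version of Corollary~\ref{TChow} for pairs $(X,D)$, via the obvious relative Chow variety $\Chow(X/D,\beta)$ and relative pushforwards $\rho_{GW}^{\mathrm{rel}}$, $\rho_{DT}^{\mathrm{rel}}$. For toric $(X,D)$, this is a direct refinement of the arguments in Section~\ref{s_proof}: the capped vertex and edge identities are proven in $T$-equivariant cohomology, and their localization contributions are supported on $T$-fixed loci of the Chow variety indexed entirely by edge degrees, so the matching persists after applying $\rho_*$.

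Second, given a nonsingular projective $3$-fold $X$, the goal is to find a semistable degeneration $\mathfrak{X}\to B$ of $X$ whose special fiber $X_1\cup_D X_2$ has components for which the Chow-level correspondence is already known, and then iterate until reaching toric pieces. The degeneration formulas in Section~\ref{sdeg} express $\bZ'_{GW}$ and $\bZ'_{DT}$ of $X$ in matching form. The new ingredient required is that $\rho_*$ commutes with the degeneration formula: the natural map from the degenerated virtual class to the degenerated Chow variety must factor compatibly through the relative Chow varieties of the components. This should follow from functoriality of virtual pushforward together with standard compatibility between Chow varieties and semistable degenerations, after suitable seminormalization.

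The main obstacle is the existence of the degenerations themselves. For an arbitrary nonsingular projective $3$-fold there is no known construction of a semistable degeneration to toric components, and in general no reason to expect one, which is why the statement is phrased as a conjecture. The method sketched above directly covers $3$-folds built from toric pieces by smoothing---for example, smoothings of unions of toric varieties glued along toric divisors, or families with toric fibers over curves---but the general case appears to need either substantially new geometric input or an altogether different approach. One natural alternative is to construct a direct comparison of virtual structures between the seminormalizations of $\oM'_{g}(X,\beta)$ and $I_n(X,\beta)$ over $\Chow(X,\beta)$, for instance by interpolating through the moduli of stable pairs as alluded to in Section~4.1 and applying wall-crossing in the derived category.

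A subsidiary technical point is the interaction between the degeneration formula and the seminormalization used to define $\Chow(X,\beta)$. The relative Chow varieties and the fiber products arising in degeneration may acquire non-seminormal singularities, and one must verify that the cycle-level pushforward identities survive the seminormalization step. Since Chow groups are preserved under seminormalization, this compatibility should ultimately be checkable at the level of cycles rather than scheme structures, and is expected to be routine once the preceding two steps are in place.
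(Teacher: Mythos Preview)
The statement you are addressing is a \emph{Conjecture}, and the paper does not attempt a proof; it is posed as an open problem immediately following Corollary~\ref{TChow}. There is therefore no proof in the paper to compare your proposal against.

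Your proposal is not a proof either, and you acknowledge as much: the degeneration strategy collapses at the step ``find a semistable degeneration of $X$ to toric pieces,'' which, as you correctly observe, is not available for an arbitrary nonsingular projective $3$-fold. So what you have written is a discussion of one natural approach and its principal obstruction, not an argument that establishes the conjecture. That is a reasonable thing to write about an open problem, but it should not be labeled a proof proposal. If you intend it as a proof, the gap is exactly the one you name: no mechanism is supplied for reducing a general $3$-fold to geometries where the Chow-level correspondence is known, and the auxiliary steps (a relative Chow-level degeneration formula, compatibility with seminormalization) are themselves only asserted, not verified.
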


Unfortunately, 
the Chow statement of Corollary \ref{TChow} does not capture the full
GW/DT correspondence as descendent insertions are not pulled-back
from the Chow variety.

\subsubsection{Polynomiality}

We have
proven the capped vertex is a rational function
$$\mathsf{C}_{DT}(\lambda,\mu,\nu,t_1,t_2,t_3)\in \mathbb{Q}(q,t_1,t_2,t_3).$$
Using a combination of geometry and box counting, we can further
prove the following result.{\footnote{The proof together with
further properties of the capped vertex will be presented
in a future paper.}}

\begin{Theorem} We have
$$\mathsf{C}_{DT}(\lambda,\mu,\nu,t_1,t_2,t_3)\in \mathbb{Q}(q) \otimes
\mathbb{Q}(t_1,t_2,t_3)$$
with  possible 
poles in $q$ occuring {only at roots of unity} and 0.
\end{Theorem}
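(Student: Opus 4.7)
The plan is to combine the capped localization formalism of Section \ref{bgty} with an explicit combinatorial analysis of the uncapped DT vertex. First I would apply capped localization to the one-vertex geometry $U/\cup_i D_i$ itself, expressing $\bC_{DT}(\lambda,\mu,\nu,t_1,t_2,t_3)$ in terms of the uncapped DT vertex $V_{\lambda\mu\nu}(q,t_1,t_2,t_3)$ together with three copies of the capped $\bA_0$-rubber, one per infinite leg. This reduces the theorem to the analogous statement for each of these two constituent pieces.

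The rubber piece follows from the reconstruction used in the proof of Lemma \ref{Pcapr}: equation \eqref{divisor2} recovers the vertical-degree pieces of $\bO_{DT}(\CR)$ from commutators with the divisor operator, divided by linear forms in the $t_i$. Since the divisor operator is given in the Fock basis by the explicit formula for $\MM(t_1,t_2)$ appearing in Section \ref{capcor}, whose entries lie in $\mathbb{Q}(q)\otimes\mathbb{Q}(t)$ with $q$-poles only at roots of unity, the induction preserves the required pole structure.

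For the uncapped vertex I would start from the explicit character sum
\[
V_{\lambda\mu\nu}(q,t) \;=\; \sum_{\pi}(-q)^{|\pi|_{\text{ren}}}\, w(\pi;t_1,t_2,t_3),
\]
where $\pi$ ranges over $3$-dimensional partitions with asymptotic profiles $(\lambda,\mu,\nu)$ and the weight $w(\pi;t)$ is read off from a free resolution of $\cI_\pi$ via the identity \eqref{Trpi} of the proof of Lemma \ref{tyt}. For each renormalized degree $d$, only finitely many $\pi$ contribute, so $[q^d]V_{\lambda\mu\nu}\in\mathbb{Q}(t)$; rationality in $q$ is Proposition \ref{Tccor}. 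The $q$-poles are automatically at roots of unity and zero, because the only denominators in $q$ that the procedure introduces are the $(1-(-q)^k)$ factors produced by the geometric-series summation along the infinite legs and the $q^{-|\mu|}$ factor coming from the starred normalization.

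The main obstacle is the uniform denominator bound in $t$: one must show that the denominators of $[q^d]V_{\lambda\mu\nu}$ in $\mathbb{Q}(t)$ are bounded in degree by a polynomial depending only on $(\lambda,\mu,\nu)$, so that the formal series in $q$ upgrades to an element of $\mathbb{Q}(q)\otimes\mathbb{Q}(t_1,t_2,t_3)$ rather than the full field $\mathbb{Q}(q,t)$. Individual weights $w(\pi;t)$ carry denominators built from tangent-type linear forms $(t,a_{ik})$ that become arbitrarily complicated as $|\pi|$ grows, so the needed bound can come only from a massive cancellation among $3$-dimensional partitions of the same renormalized size. Making this cancellation explicit --- the "box counting" half of the argument alluded to in the footnote --- is the technical core of the proof.
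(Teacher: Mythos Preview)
The paper does not actually prove this theorem. It is stated with the footnote ``The proof together with further properties of the capped vertex will be presented in a future paper,'' and the only hint given is that the argument uses ``a combination of geometry and box counting.'' So there is no proof in the paper to compare against; I can only assess your proposal on its own terms.

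Your decomposition ``capped $=$ uncapped $\times$ rubber'' is correct as a formal identity, but the inference you draw from it is not. You write that this ``reduces the theorem to the analogous statement for each of these two constituent pieces,'' and then spend the rest of the proposal trying to show that the reduced uncapped vertex lies in $\mathbb{Q}(q)\otimes\mathbb{Q}(t_1,t_2,t_3)$. There is strong reason to believe it does not. Each $3$-dimensional partition $\pi$ contributes a rational function of $t$ whose denominator is a product of linear forms $(t,a_{ik})$ with coefficients growing with $|\pi|$; there is no mechanism forcing a common $t$-denominator across all $q$-coefficients of the uncapped series. The whole point of Theorem~3 is that capping \emph{produces} this boundedness --- the cancellation you are looking for is not internal to the box-counting sum but comes from pairing with the rubber. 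Trying to prove it for the uncapped vertex in isolation is most likely trying to prove something false, not merely something hard.

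Two smaller problems reinforce this. First, you cite Proposition~\ref{Tccor} for rationality in $q$ of the uncapped vertex; that proposition concerns the capped vertex, and transferring rationality back through the rubber is not automatic. Second, your rubber argument invokes the reconstruction of Lemma~\ref{Pcapr}, which runs on the divisor operators $\bO(\Gamma_F)$ for $\Gamma$ a curve class in $\bA_n$. The rubber factors linking the capped and uncapped vertices live over $\bA_0=\mathbb{C}^2$, where $H_2=0$ and there is no such $\Gamma$; the differential equation \eqref{ndivCR} is vacuous there. The relevant rubber calculus is the $\psi$-class recursion of \cite[\S11.2]{GWDT}, not Lemma~\ref{Pcapr}.

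A more promising line is to argue directly with the capped object: find a geometric reason (compactness after suitable specialization, or a degeneration to a proper situation) that bounds the $t$-denominators of $\bC_{DT}$ uniformly in the $q$-degree, and combine that with the rationality already known from Proposition~\ref{Tccor}. This is presumably the ``geometry'' half of the hint; the ``box counting'' would then control the $q$-poles once the $\mathbb{Q}(q)\otimes\mathbb{Q}(t)$ membership is in hand.
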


An identical statement is obtained for the capped edge by studying
the differential equations of the rubber calculus. 
An analysis of the singularities of the differential
equations proves a property encountered earlier by J. Bryan:
the capped $(a,b)$-edge
$\mathsf{E}_{DT}(\lambda,\mu,t_1,t_2,t_3,t_1',t_2')$
is a Laurent polynomial in $q$ if $a,b\geq 0$.
We expect polynomiality to also be true for the capped vertex.

\begin{Conjecture}
$\mathsf{C}_{DT}(\lambda,\mu,\nu,t_1,t_2,t_3) \in \mathbb{Q}(t_1,t_2,t_3) 
[q, \frac{1}{q}].$
\end{Conjecture}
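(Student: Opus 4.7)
The plan is to strengthen the pole analysis behind the preceding Theorem by combining the GW/DT correspondence of Proposition \ref{Tccor} with the stable pairs description of Pandharipande--Thomas \cite{PT1,PT2}. The central idea is that stable pairs have no zero-dimensional floating components, so the capped stable pairs vertex should be naturally Laurent polynomial in $q$; matching it against $\bC_{DT}^*$ via the DT/PT correspondence then forces $\bC_{DT}^*$ to be Laurent polynomial as well.

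First I would define the capped PT vertex $\bC_{PT}^*(\lambda^{(1)},\lambda^{(2)},\lambda^{(3)},t_1,t_2,t_3)$ as the reduced $T$-equivariant residue partition function of stable pairs on the 1-vertex geometry $U/\cup_i D_i$ with the indicated relative conditions. For each choice of asymptotic profiles, the $T$-fixed stable pairs on $U$ are finite combinatorial decorations of the three legs, and the Euler characteristic of the underlying pure one-dimensional sheaf is bounded both above and below by the relative data; the sum defining $\bC_{PT}^*$ is therefore a finite sum of the form $\sum_n c_n(t)\,q^n$ with $c_n\in\Q(t_1,t_2,t_3)$, i.e., an element of $\Q(t_1,t_2,t_3)[q,q^{-1}]$. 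Second, I would extend the DT/PT correspondence to the capped setting: using the invertibility of the linear systems in Lemma \ref{fundlem} together with the $\bA_n$-cap degenerations of Sections \ref{pr1}--\ref{pr2}, both $\bC_{DT}^*$ and $\bC_{PT}^*$ are uniquely recovered from the DT and PT theories of a compact toric 3-fold in which $U$ sits as a vertex neighborhood. Invoking the DT/PT equivalence on compact toric 3-folds, known in the descendent equivariant case through work extending \cite{mo2}, then identifies $\bC_{DT}^*=\bC_{PT}^*$, and Laurent polynomiality of the latter transfers to the former.

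The main obstacle is the finiteness claim for the capped PT moduli. The relative moduli space of stable pairs on the noncompact geometry $U$ admits expanded degenerations along each $D_i$, and a priori stable pairs supported on rubber components could carry arbitrarily large Euler characteristic, producing infinite sums in $q$. One must show that the reduced, starred normalization exactly cancels these rubber contributions. I expect this to follow from a capped rubber analogue of the argument of Lemma \ref{Pcapr}: the capped PT rubber operator should act as the identity on the horizontal subspace, so that iterated rubber insertions contribute only polynomially in $q^{\pm 1}$. A secondary task is to verify DT/PT compatibility with the capped degeneration formula, extending the primary degeneration formula of Section \ref{sdeg} to stable pairs; once both compatibilities are in place, the reconstruction argument of Lemma \ref{fundlem} delivers the conjecture.
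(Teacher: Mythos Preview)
The statement is labeled \emph{Conjecture} in the paper and is left unproved; there is no proof in the paper to compare your proposal against. I can therefore only assess your argument on its own terms.

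The central gap is your finiteness claim for the capped PT vertex. You assert that for fixed asymptotic profiles the $T$-fixed stable pairs on $U$ are finite in number because ``the Euler characteristic of the underlying pure one-dimensional sheaf is bounded both above and below by the relative data.'' This is false. For a stable pair $(F,s)$ with fixed curve class, the cokernel of $s$ is a zero-dimensional sheaf supported on the curve, and its length is unbounded; hence $\chi(F)$ is unbounded above. Concretely, in the description of the $T$-fixed PT vertex in \cite{PT2}, the fixed points in the 2-leg and 3-leg cases are parametrized by labeled box configurations in the overlap regions of the legs, and there are infinitely many such configurations for each choice of $(\lambda,\mu,\nu)$ with at least two nonempty partitions. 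The uncapped PT vertex is an infinite $q$-series, just as the DT vertex is; capping adds rubber contributions but does not truncate the series by any elementary mechanism.

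Consequently, Laurent polynomiality on the PT side is not a free consequence of the stable pairs formalism but is a conjecture of the same depth as the DT statement you are trying to prove. The DT/PT correspondence, even once established in the capped equivariant setting you outline, would identify the two Laurent polynomiality conjectures rather than resolve either of them. Your reconstruction strategy via Lemma \ref{fundlem} and the $\bA_n$-cap degenerations is sound for transporting identities between DT and PT, but it cannot manufacture the missing finiteness input.
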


\subsection{Calculations}

We denote by $\mathsf{C}^{\circ}_{DT}(\lambda,\mu,\nu,t_1,t_2,t_3)$ the
connected
version of the capped vertex. Omitting the weights
$t_i$ from the notation,
\begin{gather*}
\sum_{\lambda,\mu,\nu}\bC_{DT}(\lambda,\mu,\nu) x^\lambda y^\mu z^\nu=
\exp(\sum_{\lambda,\mu,\nu}\bC^{\circ}_{DT}(\lambda,\mu,\nu) x^\lambda
y^\mu z^\nu),\\
x^\lambda=\prod_i x_{\lambda_i},\quad y^\mu=\prod_i
y_{\mu_i},\quad z^\nu=\prod_i z_{\nu_i}.
\end{gather*}
Let $P_{\lambda,\mu,\nu}$ be the polynomial
\begin{equation*}
t_1^{\ell(\mu)+\ell(\nu)}t_2^{\ell(\lambda)+\ell(\nu)}t_3^{\ell(\lambda)+\ell(\mu)}
 \prod_{i=1}^{|\mu|}\prod_{j=1}^{|\lambda|}(it_1+jt_2
)\prod_{j=1}^{|\nu|}\prod_{k=1}^{|\mu|}(jt_2+kt_3
)\prod_{i=1}^{|\nu|}\prod_{k=1}^{|\lambda|}(it_1+kt_3 ).
\end{equation*}
Define $\bR(\lambda,\mu,\nu)$
by the formula
\begin{gather*}
\bC^{\circ}_{DT}(\lambda,\mu,\nu)=\bR(\lambda,\mu,\nu)
q^{1-|\lambda|-|\mu|+|\nu|}(1+q)^{|\lambda|+|\mu|+|\nu|-2}
\Pi_\lambda\Pi_\mu\Pi_\nu/P_{\lambda,\mu,\nu},\\
\Pi_\lambda=\prod_i(1-(-q)^{\lambda_i})/\mathfrak{z}(\lambda).
\end{gather*}
Our calculations suggest $\bR(\lambda,\mu,\nu)$ 
is a Laurent polynomial of $q$.

The
formula for $\bR$ is known in the 1-leg case,
$$ \bR(\lambda,\emptyset,\emptyset)=1,\mbox{ if }|\lambda|\leq 1,\mbox{
and } \bR(\lambda,\emptyset,\emptyset)=0 \mbox{ otherwise.}$$
We can also prove 
$$ \bR(\lambda,[1],\emptyset)=t_3^{\ell(\lambda)},\mbox{ if }
|\lambda|>0.$$
Below we give some further values of $\bR(\lambda,\mu,\nu)$
with small partitions.

\vspace{15pt}

\begin{tabular}{l|l}
$\lambda,\mu,\nu$& $\bR(\lambda,\mu,\nu)$\\
\hline &\\
 $1^2,1^2,\emptyset$ &$((t_1+t_2-t_3)(q+{q}^{-1})+(-10t_2-10t_1-2t_3))t_3^3$\\
 $1^2,2,\emptyset$ &   $((t_1+t_2-t_3)(q+{q}^{-1})
+(-6t_1-2t_3-8t_2))t_3^2$\\
 $2,2,\emptyset$ & $((t_1+t_2-t_3)(q+q^{-1})+(-2t_3-4t_2-4t_1))t_3$\\
 $1,1,1$&$(t_1+t_2)(t_2+t_3)(t_1+t_3)$\\
 $2,1,1$&$(t_1+t_2+t_3)(t_1+2t_2)(t_2+t_3)(t_1+2t_3)$\\
 $1^2,1,1$&$(t_3^2+t_1t_3+t_2^2+t_2t_3+t_1t_2)(t_1+2t_2)(t_2+t_3)(t_1+2t_3)$\\

\end{tabular}

\vspace{10pt}

\vspace{+8 pt}
\noindent
Department of Mathematics\\
Columbia University\\
dmaulik@math.columbia.edu

\vspace{+8 pt}
\noindent
Department of Mathematics\\
Princeton University\\
oblomkov@math.princeton.edu

\vspace{+8 pt}
\noindent
Department of Mathematics\\
Princeton University\\
okounkov@math.princeton.edu

\vspace{+8 pt}
\noindent
Department of Mathematics\\
Princeton University\\
rahulp@math.princeton.edu

\end{document}